\newtheorem{theorem}{Theorem}[section]
\newtheorem{proposition}[theorem]{Proposition}
\newtheorem{conjecture}[theorem]{Conjecture}
\newtheorem{question}[theorem]{Question}
\newtheorem{corollary}[theorem]{Corollary}
\theoremstyle{remark}
\theoremstyle{definition}
\newcommand{\Q}{\Bbb Q}
\newcommand{\G}{G(K)}
\newcommand{\C}{\Bbb C}
\newcommand{\Z}{\Bbb Z}
\newcommand{\D}{\Delta}
\newcommand{\tr}{{\mathrm{tr}\,}}
\numberwithin{equation}{section}
\begin{document}

\title[Twisted Alexander polynomials on curves]
{Twisted Alexander polynomials on curves in character varieties of knot groups}

\author[T. Kim, T. Kitayama and T. Morifuji]
{Taehee Kim, Takahiro Kitayama and Takayuki Morifuji}

\begin{abstract}
For a fibered knot in the $3$-sphere
the twisted Alexander polynomial associated to an $SL(2, \C)$-character
is known to be monic.
It is conjectured that for a nonfibered knot there is a curve component
of the $SL(2, \C)$-character variety containing only finitely many characters
whose twisted Alexander polynomials are monic, i.e. finiteness
of such characters detects fiberedness of knots.
In this paper we discuss the existence of a certain curve component
which relates to the conjecture when knots have nonmonic Alexander polynomials.
We also discuss the similar problem of detecting the knot genus.
\end{abstract}

\thanks{2010 {\it Mathematics Subject Classification}.
Primary 57M27, Secondary 57M05, 57M25.}

\thanks{{\it Key words and phrases.\/}
Twisted Alexander polynomial,
character variety, fibered knot}

\address{Department of Mathematics, Konkuk University, Seoul 143-701,
Republic of Korea}
\email{tkim@konkuk.ac.kr}

\address{Graduate School of Mathematical Sciences, the University of Tokyo,
3-8-1 Komaba, Meguro-ku, Tokyo 153-8914, Japan}
\email{kitayama@ms.u-tokyo.ac.jp}

\address{Department of Mathematics, Hiyoshi Campus, Keio University,
Yokohama 223-8521, Japan}
\email{morifuji@z8.keio.jp}

\maketitle


\section{Introduction}

The twisted Alexander polynomial was introduced by Lin~\cite{Lin01-1}
for knots in the $3$-sphere and by Wada~\cite{Wada94-1}
for finitely presentable groups.
It is a generalization of the Alexander polynomial and
gives a powerful tool in low dimensional topology.
One of the most notable applications is detecting
fibered knots or more generally fibered $3$-manifolds.
To be more precise,
Friedl and Vidussi showed in \cite{FV11-1} that
the twisted Alexander polynomials associated to
finite representations determine whether knot complements
and general irreducible $3$-manifolds are fibered over the circle.
Another important application is detecting the genus of knots.
More generally,
Friedl and Vidussi showed that
the twisted Alexander polynomials associated to
finite representations also determine the Thurston norms of
irreducible $3$-manifolds which are not closed graph manifolds \cite{FV12-1}.
For literature on the twisted Alexander polynomial and
other related topics, we refer to the survey paper by
Friedl and Vidussi \cite{FV10-1}.

In this paper
we study the problems of detecting fiberedness and
the genus $g(K)$ of a knot $K$ by twisted Alexander polynomials
from the viewpoint of the $SL(2,\C)$-character variety of a knot group.
In this point of view,
we consider the regular functions on the character variety induced by the coefficients of the
twisted Alexander polynomials associated to characters of a knot group. In particular,
the regular function induced by the coefficients of the highest degree terms turns out to contain
much information of a knot.
We call a representation and its character monic
if the highest coefficient of the associated twisted Alexander polynomial is one
(see \cite{GM03-1} for example).
Moreover, we say that a representation and its character determines the knot genus
if the degree of the associated twisted Alexander polynomial equals $4g(K)-2$.
Using these terminologies,
we can say
that every $SL(2,\C)$-representation
(and its character) of a fibered knot is monic \cite{GKM05-1}
and determines the knot genus \cite{KM05-1}.

It is natural to ask whether the converse is true.
More precisely, one can ask if every $SL(2,\C)$-character is monic for a knot,
then the knot is fibered.
Regarding detecting the knot genus, a natural question also arises:
for a (possibly nonfibered) knot, does there exist
an irreducible $SL(2,\C)$-character which determines the knot genus?
However, only a few partial answers are known so far (see \cite{Morifuji08-1} for twist knots and
\cite{KM10-1} for $2$-bridge knots).
In fact, for 2-bridge knots it is shown that certain finiteness properties
of a curve component in the character variety detect fiberedness and the genus \cite{KM10-1}.
More generally,
we conjecture that for a nontrivial knot
there is an irreducible component in the $SL(2,\C)$-character variety
which satisfies a certain finiteness condition (see Questions~\ref{question:fibering} and \ref{question:genus}).

The purpose of the present paper is to give
some evidence that the conjecture is true
for a wide class of knots with nonmonic Alexander polynomials.
More generally,
we give several sufficient conditions which ensure the existence
of a certain curve component in the character variety
which relates to the conjecture mentioned above (see Sections 3 and 4).
For instance, in Theorem~\ref{thm:simple-root} we show that if a knot $K$ has
the Alexander polynomial $\D_K(t)$ which is nonmonic and has a simple root,
then there is a
curve component of the $SL(2,\C)$-character variety 
of the knot group of $K$ which contains the character
of an irreducible representation and has only finitely many monic characters.

Our criteria are sufficiently applicable and we can show
the existence of such curves for all nonfibered prime knots with $10$
or fewer crossings.
The results stated in this paper
use information of the Alexander polynomial,
however
there seems to be no a priori relation between
finiteness properties of a curve component and the Alexander polynomial.

This paper is organized as follows.
In Section 2,
we quickly review some basic materials of the character variety
and the twisted Alexander polynomials
associated to $SL(2,\C)$-representations.
Here we also recall a conjecture of Dunfield, Friedl and Jackson \cite{DFJ11-1}
on the twisted Alexander polynomial of the holonomy representation
for hyperbolic knots.
In Section 3,
we show finiteness of monic characters in curve components
of the character varieties for a wide class of nonfibered knots.
In particular, we show that
our method can also be applied to satellite knots (hence nonhyperbolic knots).
In Section 4,
we apply the argument in Section 3 to the similar problem of
detecting the knot genus.
In Section 5, as an example,
we give explicit computations of curves in the character variety and
count the number of monic characters for a Montesinos knot
with length $3$.

\section{Preliminaries}

\subsection{Character variety}

In this subsection,
we review some basics on character varieties following \cite{CS83-1}.
Let $G$ be a finitely generated group.
The {\it variety of representations} of $G$ is the set of
$SL(2,\C)$-representations:
$R(G)=\mathrm{Hom}(G,SL(2,\C))$.
Since $G$ is finitely generated, $R(G)$ can be embedded in
a product $SL(2,\C)\times\cdots\times SL(2,\C)$ by mapping
each representation to the image of a generating set.
In this manner,
$R(G)$ is an affine algebraic set whose defining polynomials
are induced by the relations of a presentation of $G$.
It is not hard to see
that this structure is independent of the choice of
presentations of $G$ up to isomorphism.

A representation $\rho \colon G\to SL(2,\C)$ is said to be
{\it abelian} if $\rho(G)$ is an abelian subgroup of
$SL(2,\C)$. A representation $\rho$ is called {\it reducible}
if there exists a proper invariant subspace in $\C^2$
under the action of $\rho(G)$.
This is equivalent to saying that $\rho$ can be conjugated to a
representation by upper triangular matrices. It is easy to see that
every abelian representation is reducible, but the converse does not
hold. Namely there is a reducible nonabelian representation in
general. When $\rho$ is not reducible, it is called
{\it irreducible}.
We denote the subset of $R(G)$
consisting of irreducible $SL(2, \C)$-representations by $R^{\text{irr}}(G)$.

Given a representation $\rho\in R(G)$, its {\it character} is
the map $\chi_\rho \colon G\to \C$ defined by
$\chi_\rho(\gamma)=\tr(\rho(\gamma))$ for $\gamma\in G$.
We denote the set of all characters by $X(G)$.
For a given element $\gamma\in G$,
we define the map $\tau_\gamma:X(G)\to\C$ by
$\tau_\gamma(\chi)=\chi(\gamma)$.
Then
it is known that $X(G)$ is an affine algebraic set
which embeds in $\C^N$ with coordinates
$(\tau_{\gamma_1},\ldots,\tau_{\gamma_N})$
for some $\gamma_1,\ldots,\gamma_N\in G$.
This affine algebraic set is called the {\it character variety}
of $G$.
We note that the set $\{\gamma_1,\ldots,\gamma_N\}$ can
be chosen to contain a generating set of $G$.
The projection $t \colon R(G)\to X(G)$ given by
$t(\rho)= \chi_\rho$ is surjective.
We denote the Zariski closure of $t(R^{\text{irr}}(G))$ by $X^{\text{irr}}(G)$.

Let $E_K=S^3\backslash\text{int}(N(K))$,
the exterior of a knot $K$ in the 3-sphere.
For a knot group $G(K)=\pi_1 E_K$,
we write $R(K)=R(G(K))$, $R^{\text{irr}}(K)=R^{\text{irr}}(G(K))$, $X(K)=X(G(K))$
and $X^{\text{irr}}(K)=X^{\text{irr}}(G(K))$ for simplicity.

Let $\eta_\lambda \colon G(K)\to SL(2,\C)$ be the abelian representation
given by
$\mu\mapsto \begin{pmatrix}\lambda&0\\0&\lambda^{-1}\end{pmatrix}$,
where $\mu$ denotes a meridian of $K$.
By results of Burde \cite{Burde67-1}
and de Rham \cite{deRham67-1},
there is a reducible nonabelian representation
$\rho_\lambda \colon G(K)\to SL(2,\C)$ such that
$\chi_{\eta_\lambda}=\chi_{\rho_\lambda}$
if and only if
$\lambda^2$ is a root of the Alexander polynomial $\D_K(t)$.

Here let us recall the following results,
due to Heusener, Porti and Su\'{a}rez Peir\'{o}
\cite{HPS01-1} (see also Shors \cite{Shors91-1}),
and due to Herald \cite{Herald97-1} and Heusener-Kroll \cite{HK98-1},
on the local description of a reducible character in $X^{\text{irr}}(K)$.
We denote by $Y(K)$ the curve component of $X(K)$
consisting of abelian characters.

\begin{proposition}{\cite[Theorem~1.2]{HPS01-1}} \label{pro:HPS}
If $\lambda^2$ is a simple root of $\D_K(t)$,
there is a unique irreducible curve component $X_0$ of $X^{\text{irr}}(K)$
such that $\chi_{\rho_\lambda}\in X_0 \cap Y(K)$.
\end{proposition}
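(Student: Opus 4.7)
The plan is to deform $\rho_\lambda$ into irreducible representations using standard deformation theory of $SL(2,\C)$-representations via group cohomology. First, I would identify the Zariski tangent space to $R(K)$ at $\rho_\lambda$ with the space of 1-cocycles $Z^1(G(K);\mathfrak{sl}_2(\C))$ under the adjoint action $\mathrm{Ad}\,\rho_\lambda$, with tangent directions along conjugation orbits given by the 1-coboundaries $B^1$. Since $\rho_\lambda$ is conjugate to an upper-triangular representation whose diagonal is $\eta_\lambda$, the adjoint action preserves a filtration of $\mathfrak{sl}_2(\C)$ whose graded pieces are the one-dimensional modules $\C_{\lambda^2}$, $\C$, and $\C_{\lambda^{-2}}$ (on which the meridian $\mu$ acts by the indicated scalar).

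Running the long exact sequence in group cohomology associated to this filtration, and identifying $\dim H^1(G(K);\C_{\lambda^{\pm 2}})$ via Fox calculus on a Wirtinger presentation with the nullity of the Alexander matrix at $t=\lambda^{\pm 2}$, the simple root hypothesis on $\D_K(t)$ (together with its reciprocality) forces each of these cohomology groups to be one-dimensional. Combined with $\dim H^1(G(K);\C)=1$, this yields a precise dimension count for $H^1(G(K);\mathfrak{sl}_2(\C))$, and in particular shows that, modulo the abelian direction tangent to $Y(K)$, exactly one infinitesimal deformation direction of $\chi_{\rho_\lambda}$ in $X(K)$ points into $X^{\text{irr}}(K)$.

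The main technical step is to show that this infinitesimal deformation integrates to an honest family of irreducible representations. The obstructions live in $H^2(G(K);\mathfrak{sl}_2(\C))$; by Poincar\'e--Lefschetz duality for the compact 3-manifold $E_K$ with toral boundary, these can be dualized to $H^0$-type computations, and the simple root hypothesis controls the relevant terms and forces the obstruction on the transverse direction to vanish. Appealing to a Kuranishi-type construction (or the Goldman--Millson deformation functor), this infinitesimal deformation exponentiates to an analytic arc through $\chi_{\rho_\lambda}$ consisting of characters of irreducible representations. The arc is contained in, and determines, an irreducible curve component $X_0$ of $X^{\text{irr}}(K)$.

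Uniqueness is the last point to address. The dimension count above shows that the tangent cone to $X^{\text{irr}}(K)$ at $\chi_{\rho_\lambda}$ transverse to $Y(K)$ is one-dimensional, so any irreducible curve component of $X^{\text{irr}}(K)$ containing $\chi_{\rho_\lambda}$ must share this tangent direction; combining this with the local smoothness at $\chi_{\rho_\lambda}$ coming from the vanishing of obstructions forces such a component to be unique. The hard part of the argument, and the main obstacle, will be the obstruction computation together with the verification of local smoothness, since this requires careful cohomological bookkeeping and a use of 3-manifold duality sharp enough to pin down the obstruction rather than merely bound it.
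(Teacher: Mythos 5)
This proposition is not proved in the paper at all: it is quoted verbatim from Heusener--Porti--Su\'arez Peir\'o \cite[Theorem~1.2]{HPS01-1} and used as a black box. So the only meaningful comparison is between your sketch and the argument in that reference, and your sketch does follow the same route: identify $T_{\rho_\lambda}R(K)$ with $Z^1(G(K);\mathfrak{sl}_2(\C))$, exploit the $\mathrm{Ad}\,\rho_\lambda$-invariant filtration of $\mathfrak{sl}_2(\C)$ with graded pieces $\C_{\lambda^{2}},\C,\C_{\lambda^{-2}}$, feed the simple-root hypothesis into $\dim H^1(G(K);\C_{\lambda^{\pm2}})=1$, and then integrate a transverse cocycle to produce irreducible deformations.

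That said, as a proof your proposal has a genuine gap exactly where you flag one, and that gap is the entire content of the theorem. First, the long exact sequences attached to the filtration only bound $\dim H^1(G(K);\mathfrak{sl}_2(\C))$; the connecting homomorphisms must be analyzed before you can claim ``exactly one'' transverse direction. Second, and more seriously, $H^2(G(K);\mathfrak{sl}_2(\C))$ does not vanish for a knot exterior, so no duality argument can kill the obstruction space wholesale; what \cite{HPS01-1} actually do is compute the quadratic cone at $\rho_\lambda$ explicitly, i.e.\ evaluate the cup-product (and higher) obstructions \emph{on the specific transverse cocycle} and show the cone has exactly two branches, one tangent to $Y(K)$ and one giving irreducible deformations. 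Asserting that ``the simple root hypothesis controls the relevant terms and forces the obstruction \ldots to vanish'' is a restatement of the theorem, not a proof of it. Finally, uniqueness of the curve component requires passing from $R(K)$ to $X(K)$ near a reducible character, where the fibers of $t\colon R(K)\to X(K)$ are not single orbits; a one-dimensional transverse tangent direction at a possibly singular point of $X(K)$ does not by itself preclude two components sharing that tangent line, so the local smoothness statement you invoke must itself be established, and it is part of what \cite{HPS01-1} prove. In short: right strategy, but the deferred ``main technical step'' is not a detail one can outsource --- it is the theorem.
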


The following proposition is an immediate consequence of
\cite[Theorem~1]{Herald97-1} or \cite[Theorem~1.1]{HK98-1}.

\begin{proposition}\label{pro:HHK}
If the equivariant knot signature function $\sigma_K \colon U(1) \to \Z$ changes
its value at $e^{2 i \alpha}$ $(\alpha \in [0, \pi])$, then there is an irreducible component $X_0$
of $X^{\text{irr}}(K)$ such that $\chi_{\rho_{e^{i \alpha}}} \in X_0 \cap Y(K)$.
\end{proposition}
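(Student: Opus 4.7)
The plan is to deduce the statement directly from the deformation theorems of Herald and Heusener--Kroll cited in the text. First, I would observe that if $\sigma_K$ changes value at $e^{2i\alpha}$, then $e^{2i\alpha}$ is a root of $\D_K(t)$, so by the Burde--de~Rham criterion recalled above there is a reducible nonabelian representation $\rho_{e^{i\alpha}}$ with $\chi_{\rho_{e^{i\alpha}}} = \chi_{\eta_{e^{i\alpha}}} \in Y(K)$. Thus the intersection $X_0 \cap Y(K)$ in the conclusion is automatically nonempty once any component $X_0$ of $X^{\text{irr}}(K)$ containing $\chi_{\rho_{e^{i\alpha}}}$ has been produced; the real content is to show such an $X_0$ exists.

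Next, I would invoke Theorem~1 of \cite{Herald97-1} or Theorem~1.1 of \cite{HK98-1}. The content of these results is that a jump of $\sigma_K$ at $e^{2i\alpha}$ produces a continuous one-parameter family of irreducible representations $\rho_s \colon G(K) \to SL(2,\C)$, defined for small $s\neq 0$, whose characters $\chi_{\rho_s}$ converge to $\chi_{\rho_{e^{i\alpha}}}$ as $s \to 0$. The original statements are phrased in terms of $SU(2)$-deformations, but these may be viewed as $SL(2,\C)$-representations, so the conclusion transfers without change.

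Finally, the existence of such an arc shows that $\chi_{\rho_{e^{i\alpha}}}$ lies in the classical, hence Zariski, closure of $t(R^{\text{irr}}(K))$, so by definition $\chi_{\rho_{e^{i\alpha}}} \in X^{\text{irr}}(K)$. Choosing any irreducible component $X_0$ of this affine set containing $\chi_{\rho_{e^{i\alpha}}}$ yields $\chi_{\rho_{e^{i\alpha}}} \in X_0 \cap Y(K)$, as desired. The only substantive step, and the reason the authors call this an immediate consequence, is the deformation theorem of Herald or Heusener--Kroll; everything else is bookkeeping about closures and irreducible components. The main obstacle would therefore lie not in the argument itself but in confirming that the cited theorems give exactly the kind of deformation described above, in particular that the signature jump hypothesis is precisely what triggers their conclusion.
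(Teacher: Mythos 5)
Your argument is correct and is exactly the deduction the paper has in mind: the paper offers no written proof, stating only that the proposition is an immediate consequence of Herald's Theorem~1 or Heusener--Kroll's Theorem~1.1, and your write-up simply unpacks that citation (signature jump $\Rightarrow$ root of $\D_K$ $\Rightarrow$ reducible nonabelian $\rho_{e^{i\alpha}}$ with abelian character, plus the cited deformation result placing that character in the classical, hence Zariski, closure of the irreducible characters). No discrepancy to report.
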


The equivariant knot signature function $\sigma_K$ is also called
the Levine-Tristram signature function (for example, see \cite[Section 2.1]{HK98-1} for the details).
We note that in this paper the signature function $\sigma_K$ is considered to be the 
{\it averaged signature function}.
Namely, for $\omega\in U(1)=S^1$ the value $\sigma_K(\omega)$ is redefined to be
the limit of the average of the values $\sigma_K(\omega_+)$ and $\sigma_K(\omega_-)$
where $\omega_+$ and $\omega_-$ are points on $S^1$ approaching $\omega$ from opposite sides.
It is known that $\sigma_K(1)=0$, and the function $\sigma_K$ is locally constant except
at zeros of $\D_K(t)$. It is also known that if $e^{2 i \alpha}$ is an odd multiple root of $\D_K(t)$,
then $\sigma_K$ changes its value at $e^{2 i \alpha}$.

\subsection{Twisted Alexander polynomials}

Following Wada \cite{Wada94-1}, we define the twisted Alexander polynomials.
First we fix an epimorphism
$\alpha \colon G(K) \to \langle t \rangle$ and a Wirtinger presentation
\[ \G = \langle \gamma_1,\ldots, \gamma_n\,|\,r_1,\ldots, r_{n-1}\rangle. \]
For a given representation $\rho\colon \G\to GL(2,\C)$, we can extend
the group homomorphism
$\alpha \otimes \rho \colon G(K) \to GL(2, \C[t^{\pm 1}])$ to
a ring homomorphism $\Phi \colon \Z[G(K)] \to M(2, \C(t))$.

We consider the $(n-1) \times n$ matrix $M$ whose $(i, j)$-entry is
$\frac{\partial r_i}{\partial \gamma_j} \in \Z[G(K)]$, where
$\frac{\partial}{\partial \gamma_j}$ denotes the Fox differential.
For $1 \leq k \leq n$, we denote by $M_k$ the $(n-1) \times (n-1)$ matrix
obtained from $M$ by removing the $k$th column, and by $\Phi(M_k)$ the matrix
in $M(2(n-1), \C(t))$ obtained by taking the images of entries of $M_k$ by $\Phi$.
Then the {\it twisted Alexander polynomial $\D_{K,\rho}(t) \in \C(t)$
associated with $\rho \colon G(K)\to GL(2,\C)$}
is defined as
\[ \D_{K,\rho}(t) = \frac{\det \Phi(M_k)}{\det \Phi(\gamma_k -1)}, \]
which is well-defined up to multiplication by
$\epsilon t^{2i}$ $(\epsilon\in \C^*,\,i\in \Z)$.
In the case that $\rho$ is a nonabelian special linear representation
$\rho\colon \G\to SL(2,\C)$, we have $\D_{K,\rho}(t)\in
\C[t^{\pm 1}]$ \cite[Theorem 3.1]{KM05-1} and
it is well-defined up to multiplication by $t^{2i}\,(i\in \Z)$.
We note that if $\rho$ and $\eta$ are mutually conjugate
$SL(2,\C)$-representations,
then $\D_{K,\rho}(t) = \D_{K,\eta}(t)$ holds.
If $\rho$ and $\eta\colon \G \to SL(2,\C)$ are irreducible
representations with $\chi_\rho = \chi_\eta$, then $\rho$ is conjugate to $\eta$
(see \cite[Proposition 1.5.2]{CS83-1}), and hence
$\D_{K,\rho}(t) = \D_{K,\eta}(t)$.
And if $\rho$ and $\eta$ are reducible,
then they are determined by $\D_K(t)$ and hence
$\D_{K,\rho}(t) = \D_{K,\eta}(t)$
(see the proof of \cite[Theorem 3.1]{KM05-1}).
Therefore, we can define the twisted Alexander polynomial
associated with $\chi \in X(K)$ to be $\D_{K,\rho}(t)$
where $\chi = \chi_\rho$ and we denote it by $\D_{K,\chi}(t)$.

\subsection{A conjecture of Dunfield, Friedl and Jackson}

We say a nonabelian representation $\rho\colon \G\to SL(2,\C)$
(resp. a nonabelian character $\chi$) is {\it monic} if $\D_{K,\rho}(t)$
(resp. $\D_{K,\chi}(t)$) is a monic polynomial,
that is, its coefficient of the highest degree term is $1$.
Note that we do not allow $-1$ as the coefficient for monicness in this paper.
It is well-known that every nonabelian representation of a fibered knot is monic \cite[Theorem~3.1]{GKM05-1},
and therefore so is every nonabelian character.

It is also well-known that the degrees of twisted Alexander polynomials
give genus bounds from below \cite{FK06-1}.
In particular, in our settings we have
\[ 4g(K)-2 \geq \deg \D_{K, \chi}(t), \]
for every $\chi \in X(K)$.
If the equality holds, then we say that $\chi$ determines the genus.
It is known that
for any $\chi \in X(K)$ of a fibered knot,
$\chi$ determines the genus \cite[Theorem~3.2]{KM05-1}.

For a hyperbolic knot $K$, the holonomy representation
$\bar{\rho}_0 \colon G(K) \to PSL(2, \C)$ has a lift $\rho_0 \colon G(K) \to SL(2, \C)$,
see \cite[Proposition 3.1.1]{CS83-1} for the detail.
Dunfield, Friedl and Jackson \cite{DFJ11-1} presented the following conjecture,
and confirmed it for all hyperbolic knots of 15 or fewer crossings.
See also \cite[Conjecture~1.13]{DFJ11-1}.

\begin{conjecture}{\cite[Conjecture~1.7]{DFJ11-1}}\label{conj:dunfield-friedl-jackson}
Let $K$ be a hyperbolic knot
and $\rho_0 \colon G(K) \to SL(2, \C)$ a lift of the holonomy representation.
Then $K$ is fibered if and only if $\chi_{\rho_0}$ is monic.
Moreover, $\chi_{\rho_0}$ determines the knot genus $g(K)$.
\end{conjecture}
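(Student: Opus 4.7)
The plan starts by disposing of the easy half and the easy part of the ``moreover'' clause. If $K$ is fibered, then by \cite[Theorem~3.1]{GKM05-1} every nonabelian $SL(2,\C)$-representation of $\G$ is monic, so in particular the lift $\rho_0$ of the holonomy is monic; similarly \cite[Theorem~3.2]{KM05-1} already guarantees $\deg\D_{K,\chi_{\rho_0}}(t)=4g(K)-2$. So only the ``if'' direction of the first statement is genuinely open, and the genus assertion for $\chi_{\rho_0}$ in the nonfibered case is also open.

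For the hard direction, the natural strategy is to combine the Culler--Shalen picture of the canonical component with the finiteness philosophy developed in this paper. First, invoke Thurston--Mostow rigidity together with \cite[Proposition~3.1.1]{CS83-1} to place $\chi_{\rho_0}$ on the distinguished \emph{geometric} component $X_0\subset X^{\mathrm{irr}}(K)$, which is one-dimensional and contains a reducible character $\chi_{\rho_\lambda}$ with $\lambda^2$ a root of $\D_K(t)$; here Proposition~\ref{pro:HPS} or Proposition~\ref{pro:HHK} is used to pin down how $X_0$ meets $Y(K)$. Second, assuming $K$ is not fibered, apply the sufficient conditions of Sections~3 and~4 to the component $X_0$ to conclude that the regular function induced by the highest coefficient of $\D_{K,\chi}(t)$ is nonconstant on $X_0$, so only finitely many $\chi\in X_0$ are monic, and likewise only finitely many $\chi\in X_0$ satisfy $\deg\D_{K,\chi}(t)=4g(K)-2$. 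Third, argue that $\chi_{\rho_0}$ is \emph{not} one of those finitely many exceptional characters, thereby producing a contradiction with the assumption that $\chi_{\rho_0}$ is monic (or determines the genus).

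The third step is the main obstacle, and in fact the reason this remains a conjecture. The results of Sections~3 and~4 only give existence of finitely many bad characters on $X_0$; they do not identify them or rule out that one of them happens to be the holonomy character. To complete the proof one would need an invariant that \emph{singles out} $\chi_{\rho_0}$ and is visibly incompatible with monicness in the nonfibered case: candidates include the $A$-polynomial and boundary-slope information, the relation of $\D_{K,\chi_{\rho_0}}(t)$ to the adjoint Reidemeister torsion and hyperbolic volume, or an approximation of $\chi_{\rho_0}$ by characters of finite representations combined with the Friedl--Vidussi theorem \cite{FV11-1,FV12-1}. Absent such a rigidity statement, the realistic path is to prove weaker versions — for example, that a generic character on $X_0$ is neither monic nor genus-determining when $K$ is nonfibered — and then attempt to promote genericity to the holonomy point using arithmetic properties of the trace field, as was done numerically by Dunfield--Friedl--Jackson for knots of at most $15$ crossings.
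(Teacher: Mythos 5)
This statement is a conjecture, not a theorem: the paper supplies no proof of it, and only records the evidence from the literature (numerical verification for hyperbolic knots of at most $15$ crossings in \cite{DFJ11-1}, and the confirmations for twist knots \cite{Morifuji12-1} and a class of $2$-bridge knots \cite{MT13-1}). So there is no proof in the paper to compare yours against. Your disposal of the easy direction is correct and is exactly what the paper asserts: fiberedness implies that \emph{every} nonabelian character is monic by \cite[Theorem~3.1]{GKM05-1} and determines the genus by \cite[Theorem~3.2]{KM05-1}, so in particular $\chi_{\rho_0}$ does. You are also right that the converse and the genus statement for nonfibered $K$ are genuinely open, and your identification of the third step as the obstruction is accurate: the machinery of Sections~3 and~4 only yields \emph{finiteness} of monic (or non-genus-determining) characters on a suitable curve, with no control over whether $\chi_{\rho_0}$ is among the exceptions.

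One further caveat about your second step: it is not justified by the paper's results either. Propositions~\ref{pro:HPS} and \ref{pro:HHK} produce \emph{some} irreducible component of $X^{\text{irr}}(K)$ meeting $Y(K)$, but the paper explicitly states (after Theorems~\ref{thm:kim-morifuji-fibered} and \ref{thm:kim-morifuji-genus}) that, except in special cases, it is not known whether the components so obtained coincide with the canonical component containing $\chi_{\rho_0}$. So even the claim that the highest-coefficient function is nonconstant on the \emph{canonical} component is open, not merely the claim that $\chi_{\rho_0}$ avoids its zero set. Your proposal is a reasonable survey of strategy, but it is not, and does not claim to be, a proof; its conclusion should be that the statement remains a conjecture, which is precisely how the paper treats it.
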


Recently, the third author confirmed the conjecture for twist knots
\cite{Morifuji12-1}, and the third author and Tran did for a certain wider
class of $2$-bridge knots \cite{MT13-1}.
These are the first infinite families of knots where
Conjecture~\ref{conj:dunfield-friedl-jackson} is verified.

It is known that we can write the twisted Alexander polynomial $\D_{K,\chi}(t)$ without any ambiguity as
\[
\D_{K,\chi}(t)
=
\sum_{j=0}^{4g-2}\psi_j(\chi) t^j.
\]
with $\C$-valued functions $\psi_j$ on $X^{\text{irr}}(K)$ such that $\psi_k=\psi_{4g-2-k}~(0\leq k\leq 2g-1)$
where $g=g(K)$ (see \cite[Theorem 1.5]{FKK11-1} and its proof). 
Then for a subvariety $X_0$ of $X^{\text{irr}}(K)$ we say that $\psi_n$ is
\emph{the coefficient of the highest degree term of $\D_{K,\chi}(t)$ on $X_0$}
if $\psi_m\equiv 0$ for $m>n$ and $\psi_n\not\equiv 0$ on $X_0$.
We end this section with the following useful proposition.

\begin{proposition}\label{pro:finitely-many-points}
Let $K$ be a knot of genus $g$
and $\chi$ a character in a curve component
$X_0$ of $X^{\text{irr}}(K)$. We write the twisted Alexander polynomial
$\D_{K,\chi}(t)$ as above:
\[
\D_{K,\chi}(t)
=
\sum_{j=0}^{4g-2}\psi_j(\chi) t^j.
\]
Then each $\psi_k$ is a regular function on $X_0$, and
if $\psi_k\not\equiv c$ on $X_0$ for a constant $c\in \C$,
then
$\psi_k=c$ for finitely many points of $X_0$.
\end{proposition}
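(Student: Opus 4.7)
The plan is to separate the statement into two independent pieces: first, to verify that each coefficient function $\psi_k$ is indeed regular on the curve component $X_0$; second, to observe that a nonconstant regular function on the irreducible affine curve $X_0$ can attain any given value only finitely often. The first part is essentially a compilation of what is already available in the literature, while the second is standard algebraic geometry.

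For regularity, I would start from Wada's Fox-calculus formula
\[
\D_{K,\rho}(t) = \frac{\det\Phi(M_k)}{\det\Phi(\gamma_k-1)},
\]
and note that both numerator and denominator are Laurent polynomials in $t$ whose coefficients are polynomial in the matrix entries of $\rho(\gamma_1),\ldots,\rho(\gamma_n)$, hence lie in the coordinate ring $\C[R(K)]$. By \cite[Theorem~3.1]{KM05-1}, when $\rho$ is a nonabelian $SL(2,\C)$-representation the quotient is a genuine Laurent polynomial in $t$, and the symmetric normalization producing $\sum_{j=0}^{4g-2}\psi_j(\chi)t^j$ is precisely the one established in \cite[Theorem~1.5]{FKK11-1}. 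Since $\D_{K,\rho}(t)=\D_{K,\eta}(t)$ whenever $\chi_\rho=\chi_\eta$ (irreducibles are conjugate by \cite[Proposition~1.5.2]{CS83-1}, and the reducible case is handled as recalled in Section~2.2), each $\psi_j$ descends to a conjugation-invariant regular function on $R^{\mathrm{irr}}(K)$. The classical fact that $X^{\mathrm{irr}}(K)$ is the GIT quotient of $R^{\mathrm{irr}}(K)$, with coordinate ring generated by the trace functions $\tau_\gamma$, then shows that $\psi_j$ defines a regular function on $X^{\mathrm{irr}}(K)$, and hence on any subvariety $X_0$.

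For the finiteness statement, I would invoke the fact that a curve component $X_0$ is, by definition, an irreducible affine variety of dimension one, so $\C[X_0]$ is an integral domain of Krull dimension one. If $\psi_k - c$ is not identically zero on $X_0$, then its vanishing locus is a proper Zariski-closed subset of this one-dimensional irreducible variety, and hence a finite set of points. This is exactly the conclusion claimed.

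The only potential obstacle lies in the regularity step: one has to be sure that the scalar denominator $\det\Phi(\gamma_k-1)$ in Wada's formula does not introduce poles on $X_0$, and that the descent from the representation variety to the character variety passes through the algebro-geometric setting without loss of regularity. Both points are already subsumed in \cite[Theorem~3.1]{KM05-1} and \cite[Theorem~1.5]{FKK11-1}, so the overall argument reduces to a short bookkeeping step plus the one-line dimension count on an irreducible curve.
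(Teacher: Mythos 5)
Your proposal is correct and follows essentially the same route as the paper: the paper establishes regularity of each $\psi_k$ by citing \cite[Theorem~1.5]{DFJ11-1} (you instead unpack the underlying descent argument from Wada's formula through the GIT quotient, which is what that theorem does), and then concludes exactly as you do, by noting that the level set of a nonconstant regular function on the irreducible curve $X_0$ is a proper closed subvariety of codimension one, hence finite.
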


\begin{proof}
It follows from \cite[Theorem~1.5]{DFJ11-1} that $\psi_k$ is a regular function.
Therefore $\psi_k^{-1}(c)$ is a subvariety of the curve $X_0$ of codimension one.
In particular, it consists of finitely many points. (Also see \cite[Corollary~1.6]{DFJ11-1}.)
\end{proof}

\section{Fibering and monic characters}

In this section
we present some classes of nonfibered knots for which the following question 
is affirmatively solved, in particular, on a curve component.

\begin{question}\label{question:fibering}
For a nonfibered knot $K$, is there an irreducible component of
$X^{\text{irr}}(K)$ with finitely many monic characters?
\end{question}

Combined with Proposition~\ref{pro:finitely-many-points},
Conjecture~\ref{conj:dunfield-friedl-jackson} implies an affirmative answer
to Question~\ref{question:fibering}
for hyperbolic knots
since the characters of lifts of the holonomy representations are known to be contained
in unique curve component.
This unique curve component of a hyperbolic knot is called the {\it canonical component}.
The third author solved the question affirmatively for twist knots \cite{Morifuji08-1},
and later the first and third authors did for $2$-bridge knots \cite{KM10-1} in a strong sense.
More precisely, the following theorem holds from
\cite[Theorem~4.3,~Remark~2.3]{KM10-1}.
Let $X_0\subset X^{\text{irr}}(K)$ be an irreducible component.
We say $X_0$ satisfies
\emph{Property $(F)$} if $X_0$ contains
finitely many monic characters and an abelian character.

\begin{theorem}{\cite[Theorem~4.3]{KM10-1}}\label{thm:kim-morifuji-fibered}
For a nonfibered $2$-bridge knot, there is a curve component of $X^{\text{irr}}(K)$
satisfying Property $(F)$.
\end{theorem}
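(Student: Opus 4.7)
The plan is to produce the curve component $X_0$ from a simple root of $\D_K(t)$, observe that it automatically contains an abelian character, and then show that the top-degree coefficient of the twisted Alexander polynomial is nonconstant along $X_0$.

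First I would invoke the classical fact that a $2$-bridge knot is fibered if and only if its Alexander polynomial is monic (in fact the fibered $2$-bridge knots are precisely the $(2,n)$-torus knots). Thus, under the hypothesis that $K$ is nonfibered, $\D_K(t)$ is nonmonic, and I may select a simple root $\lambda_0^2$ of $\D_K(t)$; the existence of such a root is straightforward to verify from the explicit form of $\D_K(t)$ obtained from the standard continued-fraction presentation of a $2$-bridge knot. Proposition~\ref{pro:HPS} then furnishes a unique irreducible curve component $X_0\subset X^{\text{irr}}(K)$ with the incidence $\chi_{\rho_{\lambda_0}}\in X_0\cap Y(K)$. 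Since $\chi_{\rho_{\lambda_0}}=\chi_{\eta_{\lambda_0}}$ and $\eta_{\lambda_0}$ is abelian, this already supplies the abelian character required by Property~$(F)$.

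To obtain finiteness of monic characters on $X_0$, I would apply Proposition~\ref{pro:finitely-many-points}: it is enough to show that the leading-coefficient function $\psi_n$ of $\D_{K,\chi}(t)$ on $X_0$ is not identically equal to $1$. The strategy is to compute $\psi_n$ at the reducible character $\chi_{\rho_{\lambda_0}}\in X_0$. A Fox-calculus computation using a two-generator one-relator Wirtinger presentation of the $2$-bridge knot group, combined with the fact that twisted Alexander polynomials of reducible representations are determined by $\D_K$, expresses $\D_{K,\rho_{\lambda_0}}(t)$ as a product involving $\D_K(\lambda_0 t)$ and $\D_K(\lambda_0^{-1}t)$. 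Its leading coefficient is therefore a nontrivial function of the leading coefficient of $\D_K(t)$, which is not $1$ by the nonmonicness assumption. Combined with the smooth local branch of $X_0$ at $\chi_{\rho_{\lambda_0}}$ provided by Proposition~\ref{pro:HPS}, along which $\psi_n$ extends regularly, this yields $\psi_n\not\equiv 1$ on $X_0$, and Proposition~\ref{pro:finitely-many-points} then gives the desired finiteness.

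The main obstacle I expect is the last step: rigorously identifying the specialization of $\psi_n$ at the reducible boundary character. The twisted Alexander polynomial is only defined up to units for each representation, and at a reducible character one must verify that the top-degree coefficient varies as a genuine regular function on $X_0$ through $\chi_{\rho_{\lambda_0}}$. This is exactly where the simple-root hypothesis enters crucially through Proposition~\ref{pro:HPS}, because it gives a unique smooth analytic branch of $X_0$ at $\chi_{\rho_{\lambda_0}}$ along which the Fox-matrix determinants behave continuously. Once that control is secured, the comparison with the leading coefficient of $\D_K(t)$ becomes an explicit calculation that the two-generator presentation of a $2$-bridge knot group renders tractable.
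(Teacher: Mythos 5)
Your argument is, in essence, the paper's proof of Theorem~\ref{thm:simple-root} specialized to $2$-bridge knots: take a simple root $\lambda_0^2$ of $\D_K(t)$, use Proposition~\ref{pro:HPS} to get a curve $X_0$ through the reducible character $\chi_{\rho_{\lambda_0}}$, evaluate the leading coefficient there via $\D_{K,\rho_{\lambda_0}}(t)=\D_K(\lambda_0 t)\D_K(\lambda_0^{-1}t)/((t-\lambda_0)(t-\lambda_0^{-1}))$ to get the square of the leading coefficient of $\D_K(t)$, and conclude with Proposition~\ref{pro:finitely-many-points}. That mechanism is sound (and your worry about regularity of $\psi_n$ at the reducible character is already handled by the citation in Proposition~\ref{pro:finitely-many-points}; no analytic-branch argument is needed). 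The problem is the input to it.

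The genuine gap is the assertion that every nonfibered $2$-bridge knot has an Alexander polynomial with a simple root, which you declare ``straightforward to verify from the explicit form'' of $\D_K(t)$. This is not proved, and it is not straightforward: the Alexander polynomial of $K(p,q)$ is given by a sum over residues (Minkus' formula), and ruling out the possibility that every complex root is multiple, uniformly over all nonfibered $(p,q)$, is a nontrivial claim that you would need to establish. The structure of the present paper reflects exactly this issue: Theorem~\ref{thm:simple-root} is proved here as a new criterion, while the $2$-bridge case is \emph{not} derived from it but instead quoted from \cite{KM10-1}, whose proof takes a different route --- it uses the two-generator, one-relator presentation of a $2$-bridge knot group and the explicit Riley-polynomial parametrization of its nonabelian representations to locate a curve component through a reducible nonabelian representation associated to an \emph{arbitrary} (not necessarily simple) root of $\D_K(t)$, and then computes the leading coefficient on that curve directly. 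That approach needs no hypothesis on the multiplicity of roots. To repair your proof you would either have to prove the simple-root claim for all nonfibered $2$-bridge knots or replace Proposition~\ref{pro:HPS} by an argument, as in \cite{KM10-1}, that produces the required curve component without the simplicity hypothesis.
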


At the present,
except some special cases,
we do not know if the curve component appeared in Theorem~\ref{thm:kim-morifuji-fibered}
is the canonical one (see \cite[Theorem~4.6,~Remark~6.5]{KM10-1}).

Our first theorem in this paper is the following.

\begin{theorem}\label{thm:simple-root}
Let $K$ be a knot such that $\D_K(t)$ is nonmonic and has a simple root.
Then there is a curve component of $X^{\text{irr}}(K)$ satisfying
Property $(F)$.
\end{theorem}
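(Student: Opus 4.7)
The plan is as follows. Let $\lambda^2$ be a simple root of $\D_K(t)$, which exists by hypothesis. Proposition~\ref{pro:HPS} at once supplies an irreducible curve component $X_0$ of $X^{\text{irr}}(K)$ with $\chi_{\rho_\lambda}\in X_0\cap Y(K)$, so $X_0$ contains the required abelian character. The task is therefore to show that the set of monic characters in $X_0$ is finite, and the idea is to do so by evaluating the leading-coefficient function at the single distinguished point $\chi_{\rho_\lambda}$ and then invoking Proposition~\ref{pro:finitely-many-points}.

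To that end, I first make the twisted Alexander polynomial at $\chi_{\rho_\lambda}$ explicit. Because $\rho_\lambda$ is reducible, $\D_{K,\chi_{\rho_\lambda}}(t)=\D_{K,\eta_\lambda}(t)$ as noted in Section~2.2. Since $\eta_\lambda$ is diagonal, the image under $\Phi=\alpha\otimes\eta_\lambda$ of every Fox derivative is a diagonal $2\times 2$ matrix, so after reordering rows and columns $\Phi(M_k)$ becomes block diagonal, with the two blocks being the classical Alexander matrix of $K$ with $t$ replaced by $\lambda t$ and $\lambda^{-1}t$ respectively. Dividing by $\det\Phi(\gamma_k-1)=(\lambda t-1)(\lambda^{-1}t-1)$ then gives
\[
\D_{K,\chi_{\rho_\lambda}}(t)=\D_K(\lambda t)\cdot\D_K(\lambda^{-1}t)
\]
up to the usual $t^{2i}$ ambiguity. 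Writing $\D_K(t)=a_n t^n+\cdots$ with $n=\deg\D_K$, the right-hand side has degree $2n$ and leading coefficient $a_n^2$. The nonmonic hypothesis forces $a_n\neq\pm 1$, and hence $a_n^2\notin\{0,1\}$.

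Now let $\psi_N$ be the coefficient of the highest-degree term of $\D_{K,\chi}(t)$ on $X_0$. Since $\psi_{2n}(\chi_{\rho_\lambda})=a_n^2\neq 0$, one has $N\geq 2n$, and $\psi_N(\chi_{\rho_\lambda})$ equals $a_n^2$ when $N=2n$ and equals $0$ when $N>2n$; in either case it is not $1$. Hence $\psi_N\not\equiv 1$ on $X_0$, and Proposition~\ref{pro:finitely-many-points} makes both $\psi_N^{-1}(1)$ and $\psi_N^{-1}(0)$ finite subsets of $X_0$. Any monic character $\chi\in X_0$ either has $\deg\D_{K,\chi}(t)=N$ with $\psi_N(\chi)=1$, or has $\deg\D_{K,\chi}(t)<N$ and so lies in $\psi_N^{-1}(0)$; in either case $\chi$ lies in a finite set, so $X_0$ contains only finitely many monic characters and satisfies Property~$(F)$. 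The main technical point I expect is verifying that the Wada formula at $\eta_\lambda$ collapses cleanly to $\D_K(\lambda t)\D_K(\lambda^{-1}t)$ so that the leading coefficient $a_n^2$ can be read off unambiguously; the remaining bookkeeping with the $\psi_j$ is then formal given Propositions~\ref{pro:HPS} and~\ref{pro:finitely-many-points}.
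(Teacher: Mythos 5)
Your proposal follows the same route as the paper's proof: Proposition~\ref{pro:HPS} supplies the curve component through the reducible character $\chi_{\rho_\lambda}$, the leading coefficient of $\D_{K,\chi_{\rho_\lambda}}(t)$ is identified as $a_n^2\neq 1$ using the nonmonicity of $\D_K(t)$, and Proposition~\ref{pro:finitely-many-points} finishes the argument. One slip in your computation: Wada's quotient does not collapse to the product $\D_K(\lambda t)\D_K(\lambda^{-1}t)$. The block-diagonal structure gives $\det\Phi(M_k)=\D_K(\lambda t)\D_K(\lambda^{-1}t)$ up to units, and one must still divide by $\det\Phi(\gamma_k-1)=(t-\lambda)(t-\lambda^{-1})$, so the correct formula (the one the paper cites from \cite[Remark~3.1~(iv)]{KM10-1}) is
\[
\D_{K,\chi_{\rho_\lambda}}(t)=\frac{\D_K(\lambda t)\D_K(\lambda^{-1}t)}{(t-\lambda)(t-\lambda^{-1})},
\]
of degree $2n-2$, not $2n$; indeed your version would violate the bound $\deg\D_{K,\chi}(t)\leq 4g-2$ whenever $\deg\D_K(t)=2g$. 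Since the denominator is monic of degree $2$, the leading coefficient is still $a_n^2\notin\{0,1\}$, so your argument survives with the indices $2n$ replaced by $2n-2$. Your final bookkeeping, treating separately the monic characters whose twisted polynomial drops degree (so that both $\psi_N^{-1}(1)$ and $\psi_N^{-1}(0)$ must be finite), is a point the paper leaves implicit and is handled correctly.
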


\begin{proof}
By Proposition~\ref{pro:HPS}, there is a reducible character
$\chi_{\rho_\lambda}$ and an irreducible curve component
$X_\lambda$ of $X^{\text{irr}}(K)$ such that $\D_K(\lambda^2)=0$
and 
$\chi_{\rho_\lambda}\in X_\lambda\cap Y(K)$. 
Since the twisted Alexander polynomial associated with
$\rho_\lambda$ is given by
\[
\D_{K,\rho_\lambda}(t)
=
\frac{\D_K(\lambda t)\D_K(\lambda^{-1}t)}{(t-\lambda)(t-\lambda^{-1})}
\]
(see \cite[Remark~3.1~(iv)]{KM10-1}),
$\D_{K,\rho_\lambda}(t)$ is monic if and only if
$\D_K(t)$ is monic.
Hence,
by the assumption,
the coefficient of the highest degree term 
of the twisted Alexander polynomial on $X_\lambda$ is
not the constant one and
the assertion follows from
Proposition~\ref{pro:finitely-many-points}.
This completes the proof.
\end{proof}

As an immediate corollary,
if $\D_K(t)$ is irreducible over $\Q$ and nonmonic,
the knot $K$ satisfies the assumption of Theorem~\ref{thm:simple-root}.
It is known that a prime knot $K$ of $10$ or fewer crossings is fibered
if and only if $\D_K(t)$ is monic.
Then it can be checked that for all non $2$-bridge and nonfibered prime knots
with $10$ or fewer crossings their Alexander polynomials have a simple root,
although they might have nontrivial multiple factors in $\D_K(t)$.
Hence, by Theorems~\ref{thm:kim-morifuji-fibered} and \ref{thm:simple-root},
nonfibered prime knots with $10$ or fewer crossings have curve components
which satisfy Property $(F)$.

Theorem~\ref{thm:simple-root} can also be applied to satellite knots,
and it shows that Question~\ref{question:fibering} also makes sense for nonhyperbolic knots.
Let $\tilde{K}$ be a knot embedded in a standard solid torus
$\tilde{V}=S^1\times D^2\subset S^3$.
We assume that $\tilde{K}$ is not isotopic to $S^1\times\{0\}$
nor is contained in any $3$-ball in $\tilde{V}$.
Let $h$ be a homeomorphism from $\tilde{V}$ onto a closed tubular
neighborhood of a nontrivial knot $\hat{K}$ which maps a longitude
of $\tilde{V}$ onto a longitude of $\hat{K}$.
The image $K=h(\tilde{K})$ is called a \textit{satellite knot} with
\textit{companion knot} $\hat{K}$ and \textit{pattern} $(\tilde{V},\tilde{K})$.
The \textit{winding number} of $\tilde{K}$ in $\tilde{V}$ is the
nonnegative integer $n$ such that the homomorphism
$H_1\tilde{K}\to H_1\tilde{V}\cong \Z$ induced by
the inclusion has the image $n\Z$.
Under the notations above, it is known that the Alexander polynomial of
a satellite knot $K$, with pattern $\tilde{K}$, companion $\hat{K}$ and the winding number $n$
satisfies the following:
\[ \D_K(t)=\D_{\tilde{K}}(t)\cdot\D_{\hat{K}}(t^n). \]
Hence, by Theorem~\ref{thm:simple-root},
for a satellite knot $K$ with pattern $\tilde{K}$ and the \textit{winding number zero}\/
such that $\D_{\tilde{K}}(t)$ is nonmonic and has a simple root,
there is a curve component satisfying Property $(F)$.

Recall that a knot $K$ is called {\it small}
if the exterior $E_K$ contains no closed embedded essential surface.
It is known that torus knots~\cite{Jaco80-1}, $2$-bridge knots~\cite{GL84-1, HT85-1}
and Montesinos knots with length $3$~\cite[Corollary 4]{Oertel84-1} are small.
It is also known that some knots of braid index $3$ or $4$ are small
(see \cite{Finkelstein98-1} and \cite{Matsuda01-1}).

\begin{theorem}\label{thm:signature}
Let $K$ be a small knot such that $\D_K(t)$ is nonmonic.
If the equivariant knot signature function $\sigma_K$ is not identically zero,
then there is a curve component
of $X^{\text{irr}}(K)$ satisfying Property $(F)$.
\end{theorem}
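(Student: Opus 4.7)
The plan is to adapt the argument of Theorem~\ref{thm:simple-root} by replacing the input from Proposition~\ref{pro:HPS} with the input from Proposition~\ref{pro:HHK}, using the smallness hypothesis exactly to guarantee that the irreducible component produced is one-dimensional.

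First, since $\sigma_K(1)=0$ and $\sigma_K$ is integer-valued and locally constant away from the zeros of $\D_K(t)$, the hypothesis $\sigma_K\not\equiv 0$ forces $\sigma_K$ to change its value at some $e^{2i\alpha}\in U(1)$. Proposition~\ref{pro:HHK} then supplies an irreducible component $X_0$ of $X^{\text{irr}}(K)$ together with a reducible nonabelian character $\chi_{\rho_{e^{i\alpha}}}\in X_0\cap Y(K)$; note in particular that $\lambda=e^{i\alpha}$ satisfies $\D_K(\lambda^2)=0$, so $\rho_\lambda$ is one of the Burde--de~Rham representations.

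Next I would invoke the smallness of $K$ to upgrade $X_0$ from merely an irreducible component to a curve component. The standard consequence of Culler--Shalen theory is that any higher-dimensional component of $X^{\text{irr}}(K)$ would produce, via its ideal points, a closed essential surface in $E_K$; hence for a small knot every irreducible component of $X^{\text{irr}}(K)$ is one-dimensional. This step is the main obstacle in the sense that it is the only new ingredient beyond the proof of Theorem~\ref{thm:simple-root}, and it is the only reason the hypothesis of smallness appears in the statement.

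Once $X_0$ is known to be a curve, the remainder is essentially identical to the proof of Theorem~\ref{thm:simple-root}. Using
\[
\D_{K,\rho_{e^{i\alpha}}}(t)
=
\frac{\D_K(e^{i\alpha}t)\,\D_K(e^{-i\alpha}t)}{(t-e^{i\alpha})(t-e^{-i\alpha})}
\]
from \cite[Remark~3.1~(iv)]{KM10-1}, the polynomial $\D_{K,\rho_{e^{i\alpha}}}(t)$ is monic if and only if $\D_K(t)$ is. Since $\D_K(t)$ is nonmonic by hypothesis, the leading coefficient $\psi_n$ of the twisted Alexander polynomial on $X_0$ satisfies $\psi_n(\chi_{\rho_{e^{i\alpha}}})\neq 1$, so $\psi_n\not\equiv 1$ on $X_0$. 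Proposition~\ref{pro:finitely-many-points} then shows that only finitely many characters of $X_0$ are monic, while $\chi_{\rho_{e^{i\alpha}}}\in X_0\cap Y(K)$ provides the required abelian character. Hence $X_0$ satisfies Property $(F)$.
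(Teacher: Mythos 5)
Your proof is correct and follows essentially the same route as the paper: apply Proposition~\ref{pro:HHK} to obtain a reducible nonabelian character in an irreducible component $X_0$ meeting $Y(K)$, use smallness (via Culler--Shalen/CCGLS) to conclude $X_0$ is a curve, and then run the leading-coefficient argument of Theorem~\ref{thm:simple-root} with Proposition~\ref{pro:finitely-many-points}. The only difference is that you spell out two details the paper leaves implicit, namely why $\sigma_K\not\equiv 0$ forces a jump at some $e^{2i\alpha}$ and why that point is a root of $\D_K(t)$.
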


\begin{proof}
Suppose that $\sigma_K$ changes its value at $e^{2 i \alpha}$.
By Proposition~\ref{pro:HHK}, there is a reducible character
$\chi_{\rho_{e^{i \alpha}}}$ and an irreducible component
$X_0$ of $X^{\text{irr}}(K)$ such that $\chi_{\rho_{e^{i \alpha}}} \in X_0 \cap Y(K)$.
Since $K$ is small, $X_0$ is a curve
\cite[Section 2.4]{CCGLS94-1}.
Now the similar argument as in the proof of Theorem~\ref{thm:simple-root}
can work in this setting.
\end{proof}

\begin{theorem} \label{thm:epimorphism}
Let $K$ be a small knot such that $\D_K(t)$ is nonmonic.
If there are a knot $K'$ and an epimorphism $\phi \colon G(K) \to G(K')$
such that there is a component $X_0'$ of $X^{\text{irr}}(K')$
satisfying Property $(F)$, then there is
a curve component $X_0$ of $X^{\text{irr}}(K)$
satisfying Property $(F)$.
\end{theorem}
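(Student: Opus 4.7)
The plan is to pull back the component $X_0'$ through the epimorphism $\phi$ so as to produce a curve in $X^{\text{irr}}(K)$, and then reproduce on it the reducible-character argument of Theorem~\ref{thm:simple-root}. The map $\phi^*\colon R(K')\to R(K)$, $\rho'\mapsto\rho'\circ\phi$, descends to an injective regular map $\phi^*\colon X(K')\to X(K)$, since $\phi$ being surjective makes $\phi^*$ injective on both representations and characters. Because $\rho'\circ\phi$ and $\rho'$ have identical image, this map carries irreducible, reducible nonabelian, and abelian characters to characters of the same respective type; and since $\phi$ induces an isomorphism on abelianizations (both are $\Z$), abelian characters are preserved with the same meridian parameter.

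I would then identify the target curve $X_0$ as follows. Fix an abelian character $\chi_0'\in X_0'\cap Y(K')$ provided by Property $(F)$; by the Burde--de Rham correspondence, $\chi_0'=\chi_{\rho_\lambda'}$ for a reducible nonabelian representation $\rho_\lambda'$ of $G(K')$ with $\D_{K'}(\lambda^2)=0$. Pulling back, $\chi_0:=\phi^*(\chi_0')$ is simultaneously the abelian character $\chi_{\eta_\lambda}$ and the character of the reducible nonabelian representation $\rho_\lambda'\circ\phi$ of $G(K)$. Applying Burde--de Rham now on the $K$-side gives $\D_K(\lambda^2)=0$, so $\chi_0=\chi_{\rho_\lambda}$ in the notation of Section~2. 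Since $\phi^*$ is injective, $\overline{\phi^*(X_0')}$ is a one-dimensional irreducible subvariety of $X^{\text{irr}}(K)$ containing $\chi_0$; smallness of $K$ (\cite[Section~2.4]{CCGLS94-1}) forces the irreducible component $X_0$ of $X^{\text{irr}}(K)$ containing it to be a curve, whence $X_0=\overline{\phi^*(X_0')}$ and $\chi_0\in X_0\cap Y(K)$, giving the abelian-character half of Property $(F)$.

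For the finite monic condition I would then repeat the endgame of Theorem~\ref{thm:simple-root}. The formula
\[
\D_{K,\rho_\lambda}(t)=\frac{\D_K(\lambda t)\D_K(\lambda^{-1}t)}{(t-\lambda)(t-\lambda^{-1})}
\]
shows that $\D_{K,\rho_\lambda}(t)$ has leading coefficient $a_{2g}^2$, where $a_{2g}$ is the leading coefficient of $\D_K(t)$. Since $\D_K$ is nonmonic, $a_{2g}^2\neq 1$, so the top-degree coefficient function $\psi_{4g-2}$ on $X_0$ takes at $\chi_0$ a nonzero value different from $1$. Hence $\psi_{4g-2}$ is neither identically $0$ nor identically $1$ on $X_0$, and Proposition~\ref{pro:finitely-many-points} implies that $\psi_{4g-2}^{-1}(0)\cup\psi_{4g-2}^{-1}(1)$ is finite on $X_0$. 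Since every monic character in $X_0$ must lie in this union, $X_0$ carries only finitely many monic characters.

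The step I expect to be the main obstacle is verifying that $\overline{\phi^*(X_0')}$ lies in a curve component of $X^{\text{irr}}(K)$: dimensionally this depends on injectivity of $\phi^*$, but ruling out higher-dimensional ambient components is precisely what forces the use of the smallness hypothesis on $K$. Once $X_0$ is recognised as a curve passing through the reducible character $\chi_0$ with $\D_K(\lambda^2)=0$, the remainder of the argument is a direct transplant of the reasoning of Theorem~\ref{thm:simple-root}.
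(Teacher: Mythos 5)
Your proof follows the paper's argument essentially verbatim: pull back $X_0'$ under the injective regular map $\phi^*$, use smallness of $K$ (plus the fact that irreducibility of representations is preserved under composition with an epimorphism) to see that the image is a curve component of $X^{\text{irr}}(K)$ containing an abelian character $\chi_{\eta_\lambda}$, and conclude from the nonmonicity of $\D_{K,\chi_{\eta_\lambda}}(t)$ together with Proposition~\ref{pro:finitely-many-points}. Two small remarks: your detour through the Burde--de Rham correspondence is unnecessary (and slightly circular, since identifying the abelian character of $X_0'$ with that of a reducible \emph{nonabelian} representation already presupposes that $\lambda^2$ is a root of $\D_{K'}(t)$), because the displayed formula for $\D_{K,\chi}(t)$ at a reducible character holds regardless; and the final step should be phrased in terms of the top \emph{nonvanishing} coefficient $\psi_n$ on $X_0$ rather than $\psi_{4g-2}$, since when $\deg\D_K(t)<2g(K)$ one has $\psi_{4g-2}(\chi_{\eta_\lambda})=0$ rather than $a_{2g}^2$, and $\psi_{4g-2}$ could then vanish identically on $X_0$, making $\psi_{4g-2}^{-1}(0)$ all of $X_0$ --- whereas for the maximal $n$ with $\psi_n\not\equiv 0$ one still gets $\psi_n(\chi_{\eta_\lambda})\neq 1$ and the same finiteness conclusion.
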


\begin{proof}
It is straightforward to see that the regular map $\phi^* \colon X(K') \to X(K)$
induced by the epimorphism $\phi$ is injective.
We set $X_0$ to be the image $\phi^*(X_0')$.
It is a curve component of $X^{\text{irr}}(K)$ containing an abelian character
since $K$ is small. Moreover, the composition of an irreducible representation
and an epimorphism of groups is also irreducible.

An abelian character in $X_0$ can be written as $\chi_{\eta_\lambda}$
by an abelian representation $\eta_\lambda$.
Since $\Delta_K(t)$ is not monic, neither is $\D_{K,\chi_{\eta_\lambda}}(t)$.
Hence the coefficient of the highest degree term of $\D_{K,\chi}(t)$ on $X_0$ is not the constant one.
Now the theorem follows from Proposition~\ref{pro:finitely-many-points}.
\end{proof}

The following corollary is an immediate consequence of
Theorems~\ref{thm:kim-morifuji-fibered}, \ref{thm:simple-root},
\ref{thm:signature} and \ref{thm:epimorphism}.

\begin{corollary} \label{cor:epimorphism}
Let $K$ be a small knot.
If there are a knot $K'$ satisfying one of the following:
\begin{enumerate}
\item $K'$ is a nonfibered $2$-bridge knot,
\item $\D_{K'}(t)$ is nonmonic and has a simple root,
\item $\D_{K'}(t)$ is nonmonic and $\sigma_{K'}$ is not identically zero,
\end{enumerate}
and an epimorphism $\phi \colon G(K) \to G(K')$,
then there is a curve component $X_0$ of $X^{\text{irr}}(K)$
which satisfies Property $(F)$.
\end{corollary}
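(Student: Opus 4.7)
The plan is to reduce the corollary directly to Theorem~\ref{thm:epimorphism} by first producing, in each of the three cases, an irreducible component $X_0'$ of $X^{\text{irr}}(K')$ satisfying Property $(F)$, and then pulling it back along the injective regular map $\phi^*\colon X(K') \to X(K)$ supplied by the epimorphism $\phi$.

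For each of the hypotheses, one of the earlier theorems delivers the required component $X_0'$. In case (i), Theorem~\ref{thm:kim-morifuji-fibered} applies directly to the nonfibered $2$-bridge knot $K'$. In case (ii), Theorem~\ref{thm:simple-root} applies to $K'$. In case (iii), Theorem~\ref{thm:signature} applies to $K'$, which we take to be small in this setting (either as an implicit standing assumption or as an additional hypothesis). With $X_0'$ in hand, the remaining task is to invoke Theorem~\ref{thm:epimorphism}, whose hypotheses are that $K$ be small (given) and that $\D_K(t)$ be nonmonic.

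The one point requiring a brief argument is the nonmonic-ness of $\D_K(t)$. In each case, $\D_{K'}(t)$ is itself nonmonic: this is hypothesised in (ii) and (iii), and in (i) it follows from the well-known fact that a $2$-bridge knot is fibered if and only if its Alexander polynomial is monic. The existence of an epimorphism $\phi\colon G(K)\to G(K')$ then forces the classical divisibility $\D_{K'}(t)\mid \D_K(t)$ (up to units), so $\D_K(t)$ cannot be monic either. This is essentially the only step that goes beyond formal assembly; once it is in place, Theorem~\ref{thm:epimorphism} yields the required curve component $X_0 = \phi^*(X_0')$ of $X^{\text{irr}}(K)$ satisfying Property $(F)$, completing the proof.
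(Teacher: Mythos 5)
Your argument is exactly the paper's: the corollary is stated there as an immediate consequence of Theorems~\ref{thm:kim-morifuji-fibered}, \ref{thm:simple-root}, \ref{thm:signature} and \ref{thm:epimorphism}, with the hypothesis that $\D_K(t)$ is nonmonic supplied by the divisibility $\D_{K'}(t)\mid\D_K(t)$ coming from the epimorphism, just as you observe (the paper records this in the remark immediately following the corollary). Your note that case~(iii) implicitly needs $K'$ to be small in order to invoke Theorem~\ref{thm:signature} is a fair catch of a point the paper leaves unstated.
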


It is well-known that if there is an epimorphism $\phi \colon G(K) \to G(K')$,
then $\D_{K'}(t)$ divides $\D_K(t)$.
Moreover,
it is also known that $\D_{K'}(t)$ is nonmonic for a
nonfibered $2$-bridge knot $K'$.
Hence, in the above theorem,
it turns out that $\D_K(t)$ is nonmonic.

From \cite[Section~8.2]{Kitano11-1}
(see also \cite{IMS11-1}, \cite[Section~9]{ORS08-1}),
we see that for a given $2$-bridge knot $K'$
there exists a Montesinos knot $K$ with length $3$ such that
$G(K)$ admits an epimorphism to $G(K')$.
Namely there are infinitely many small knots
which satisfy the assumption of Corollary~\ref{cor:epimorphism}.

\section{Detecting genus}

In this section we consider the following analogous question
to Question~\ref{question:fibering} on detecting the knot genus.

\begin{question}\label{question:genus}
For a nontrivial knot $K$, is there a component of $X^{\text{irr}}(K)$
where all but finitely many characters determine the knot genus?
\end{question}

As was mentioned in the introduction,
every character of a fibered knot determines the knot genus.
Hence,
Question~\ref{question:genus} has an obvious positive answer for
hyperbolic fibered knots
since the canonical components satisfy the condition.
Moreover
Conjecture~\ref{conj:dunfield-friedl-jackson} implies an affirmative answer
to the question for hyperbolic nonfibered knots.
The first and third authors solved the question affirmatively for
$2$-bridge knots \cite{KM10-1} in a strong sense.
Let $X_0\subset X^{\text{irr}}(K)$ be an irreducible component.
We say $X_0$ satisfies \emph{Property $(G)$}
if all but finitely many characters in $X_0$ determine the knot genus
and $X_0$ contains an abelian character.

\begin{theorem}{\cite[Theorem~4.4]{KM10-1}}\label{thm:kim-morifuji-genus}
For a $2$-bridge knot, there is a curve component of $X^{\text{irr}}(K)$
which satisfies Property $(G)$.
\end{theorem}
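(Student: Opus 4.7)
The plan is to combine Proposition~\ref{pro:finitely-many-points} with the reducible-character argument of Theorem~\ref{thm:simple-root}, adapted to the explicit structure of the character varieties of $2$-bridge knots. Since $2$-bridge knots are small, every irreducible component of $X^{\text{irr}}(K)$ is already a curve, and the task reduces to exhibiting one such curve $X_0$ on which the leading coefficient $\psi_{4g-2}$ of $\D_{K,\chi}(t)$ is not identically zero.

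First I would locate a curve component $X_0\subset X^{\text{irr}}(K)$ containing a reducible nonabelian character $\chi_{\rho_\lambda}$ for some root $\lambda^2$ of $\D_K(t)$. When $\D_K(t)$ has a simple root this is handed to us by Proposition~\ref{pro:HPS}; in general one appeals to the Riley-polynomial description of $X^{\text{irr}}(K)$ for $2$-bridge knots, which enumerates the curve components and shows that the relevant one intersects $Y(K)$ at a reducible character of the required form. Having $X_0$ in hand, I would then evaluate the top-degree coefficient of $\D_{K,\chi}(t)$ at $\chi_{\rho_\lambda}$ via
\[
\D_{K,\rho_\lambda}(t)=\frac{\D_K(\lambda t)\,\D_K(\lambda^{-1}t)}{(t-\lambda)(t-\lambda^{-1})}.
\]
A short degree count shows that $\deg\D_{K,\rho_\lambda}(t)=4g(K)-2$ with leading coefficient $a_{2g}^{\,2}$, where $a_{2g}\neq 0$ is the leading coefficient of $\D_K(t)$. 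Hence the regular function $\psi_{4g-2}$ on $X_0$ does not vanish identically.

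Applying Proposition~\ref{pro:finitely-many-points} with $c=0$, the zero set $\psi_{4g-2}^{-1}(0)\subset X_0$ is finite, so all but finitely many $\chi\in X_0$ satisfy $\deg\D_{K,\chi}(t)=4g(K)-2$ and therefore determine the knot genus. Combined with the fact that $\chi_{\rho_\lambda}=\chi_{\eta_\lambda}$ is an abelian character in $X_0$, this verifies Property~$(G)$. The step I expect to be the main obstacle is the very first one when $\D_K(t)$ has only multiple roots: Proposition~\ref{pro:HPS} no longer applies directly, and one must instead invoke the explicit Riley-polynomial machinery to confirm that some $\chi_{\rho_\lambda}$ actually lies in the Zariski closure of $t(R^{\text{irr}}(K))$ on a single curve component.
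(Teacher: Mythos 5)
A preliminary point: the paper does not prove Theorem~\ref{thm:kim-morifuji-genus} at all; it is quoted verbatim from \cite[Theorem~4.4]{KM10-1}, so there is no in-paper argument to compare yours against. That said, your template --- find a curve component of $X^{\text{irr}}(K)$ through a reducible nonabelian character $\chi_{\rho_\lambda}$, evaluate the twisted Alexander polynomial there via $\D_K(\lambda t)\D_K(\lambda^{-1}t)/\bigl((t-\lambda)(t-\lambda^{-1})\bigr)$, note that its degree is $4g(K)-2$ because $\deg\D_K(t)=2g(K)$ for $2$-bridge (hence alternating) knots, and then apply Proposition~\ref{pro:finitely-many-points} with $c=0$ --- is exactly the template this paper uses for Theorems~\ref{thm:genus-simple-root}--\ref{thm:genus-epimorphism}, and it is consistent with how \cite{KM10-1} actually argues. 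The endgame is fine: $\psi_{4g-2}\not\equiv 0$ on the curve forces all but finitely many characters to determine the genus, and $\chi_{\rho_\lambda}=\chi_{\eta_\lambda}$ supplies the abelian character required by Property~$(G)$.

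The genuine gap is the step you flag yourself, and flagging it does not fill it. For a $2$-bridge knot whose Alexander polynomial has no simple root, Proposition~\ref{pro:HPS} is unavailable, and Proposition~\ref{pro:HHK} only produces a component at roots of $\D_K(t)$ on the unit circle where the signature function actually jumps (odd multiplicity), so neither tool in this paper is guaranteed to apply. Writing that ``the Riley-polynomial description \dots shows that the relevant component intersects $Y(K)$ at a reducible character of the required form'' asserts precisely the hard content of the theorem rather than proving it. To close the gap one must argue from Riley's parametrization: the nonabelian representations of a $2$-bridge knot group correspond to the zero locus of the Riley polynomial $\phi_K(s,u)$ in $\C^2$, the reducible nonabelian ones lie on $\{u=0\}$, the restriction $\phi_K(s,0)$ has the same zero set as $\D_K(s^2)$ and in particular is not identically zero, so $\{u=0\}$ is not a component and every point $(\lambda,0)$ with $\D_K(\lambda^2)=0$ lies on a curve component whose generic point is an irreducible representation; one must also check that this component descends to a single curve component of $X^{\text{irr}}(K)$ containing $\chi_{\rho_\lambda}=\chi_{\eta_\lambda}$. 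That chain of facts is the substance of \cite{KM10-1}; without it your argument only covers the cases already handled by Theorems~\ref{thm:genus-simple-root} and~\ref{thm:genus-signature}.
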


As before we do not know if the curve component appeared in
Theorem~\ref{thm:kim-morifuji-genus} is the canonical one.
Furthermore
it is nontrivial whether there is a curve component which satisfies
Property $(G)$ even for a hyperbolic fibered knot.

Analogous arguments for Theorems~\ref{thm:simple-root},
\ref{thm:signature}, \ref{thm:epimorphism} and Corollary~\ref{cor:epimorphism}
present an affirmative answer to Question~\ref{question:genus}
for similar classes of knots.

\begin{theorem}\label{thm:genus-simple-root}
Let $K$ be a knot with $\deg \D_K(t)=2g(K)$ and
$\D_K(t)$ has a simple root.
Then there is a curve component $X_0$ of $X^{\text{irr}}(K)$ which satisfies Property $(G)$.
\end{theorem}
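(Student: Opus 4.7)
The plan is to copy the strategy of Theorem~\ref{thm:simple-root} verbatim, but to track the \emph{degree} of the twisted Alexander polynomial instead of its leading coefficient. Let $\lambda^2$ be a simple root of $\D_K(t)$. By Proposition~\ref{pro:HPS} there is an irreducible curve component $X_\lambda$ of $X^{\text{irr}}(K)$ with $\chi_{\rho_\lambda}\in X_\lambda\cap Y(K)$, so $X_\lambda$ already contains the abelian character $\chi_{\eta_\lambda}=\chi_{\rho_\lambda}$ required by Property $(G)$.

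The decisive computation uses the same formula
\[
\D_{K,\rho_\lambda}(t)=\frac{\D_K(\lambda t)\D_K(\lambda^{-1}t)}{(t-\lambda)(t-\lambda^{-1})}
\]
recalled in the proof of Theorem~\ref{thm:simple-root}. Let $a_{2g}\neq 0$ be the leading coefficient of $\D_K(t)$. Under the hypothesis $\deg\D_K(t)=2g(K)$, the numerator is a polynomial of degree $4g(K)$ with leading coefficient $a_{2g}^2$. Division by the monic quadratic $(t-\lambda)(t-\lambda^{-1})$ drops the degree by two and preserves the leading coefficient, so $\D_{K,\rho_\lambda}(t)$ has degree exactly $4g(K)-2$; that is, $\chi_{\rho_\lambda}$ determines the knot genus.

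Writing $\D_{K,\chi}(t)=\sum_{j=0}^{4g-2}\psi_j(\chi)t^j$ as in Proposition~\ref{pro:finitely-many-points}, the previous step gives $\psi_{4g-2}(\chi_{\rho_\lambda})=a_{2g}^2\neq 0$, hence $\psi_{4g-2}\not\equiv 0$ on $X_\lambda$. Proposition~\ref{pro:finitely-many-points} applied with $c=0$ then says $\psi_{4g-2}$ vanishes on only finitely many points of $X_\lambda$, and every remaining character has $\deg\D_{K,\chi}(t)=4g(K)-2$ and thus determines the genus, completing the verification of Property $(G)$. The only bookkeeping needed is the $t^{2i}$-ambiguity of Wada's polynomial, which shifts exponents but does not change which coefficient is the top nonvanishing one; with that caveat noted, no genuine obstacle arises and the proof is essentially a verbatim variant of Theorem~\ref{thm:simple-root}, with the constant $c=1$ replaced by $c=0$ and the nonmonicness hypothesis replaced by the degree condition $\deg\D_K(t)=2g(K)$, the latter being exactly what prevents the top coefficient of the numerator from collapsing.
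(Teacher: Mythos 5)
Your proposal is correct and follows essentially the same route as the paper's own proof: invoke Proposition~\ref{pro:HPS} to obtain the curve $X_\lambda$ through the reducible character, use the formula for $\D_{K,\rho_\lambda}(t)$ to see that this character determines the genus under the hypothesis $\deg\D_K(t)=2g(K)$, and conclude via Proposition~\ref{pro:finitely-many-points} with $c=0$. The explicit leading-coefficient computation $a_{2g}^2\neq 0$ is a slightly more detailed justification than the paper gives, but the argument is the same.
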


\begin{proof}
As in the proof of Theorem~\ref{thm:simple-root},
by Proposition~\ref{pro:HPS}
there is a curve component $X_\lambda$ of $X^{\text{irr}}(K)$
containing a reducible character $\chi_{\rho_\lambda}$ such that
\[
\D_{K,\chi_{\rho_\lambda}}(t)
=
\frac{\D_K(\lambda t)\D_K(\lambda^{-1}t)}{(t-\lambda)(t-\lambda^{-1})},
\]
which determines the genus by the assumption.
Hence the coefficient
of the highest degree term of $\D_{K,\chi}(t)$ on $X_\lambda$ is not identically zero,
and the theorem follows from Proposition~\ref{pro:finitely-many-points}.
\end{proof}

By Theorems \ref{thm:kim-morifuji-genus}, \ref{thm:genus-simple-root}
and an analogous argument to Property $(F)$ we can check that all prime knots
with $10$ or fewer crossings, except the following seven $3$-bridge knots,
have curve components which satisfy Property $(G)$:
\begin{align*}
&\D_{8_{10}}(t)
=\D_{10_{143}}(t)
=(t^2-t+1)^3\\
&\D_{8_{20}}(t)
=\D_{10_{140}}(t)
=(t^2-t+1)^2\\
&\D_{10_{99}}(t)
=(t^2-t+1)^4\\
&\D_{10_{123}}(t)
=(t^4-3t^3+3t^2-3t+1)^2\\
&\D_{10_{137}}(t)
=(t^2-3t+1)^2.
\end{align*}
Note that it is known that for all prime knots with $10$ or fewer crossings,
$\deg \D_K(t) = 2 g(K)$.

The knots $8_{10}, 8_{20},10_{137}$, $10_{140}$ and $10_{143}$ are known to be Montesinos knots with length $3$,
hence they are small.
On the other hand, the knots $10_{99}$ and $10_{123}$ are not small.
In fact, we can construct essential surfaces in the exteriors as follows:
The knots $10_{99}$ and $10_{123}$ are obtained as the closure of the $3$-braids
\[ \sigma_1^3 \sigma_2^2 \sigma_1^2 \sigma_2^{-3} \sigma_1^{-2} \sigma_2^{-2},~
\sigma_1^{-3} \sigma_2^{-2} \sigma_1^2 \sigma_2^3 \sigma_1^2 \sigma_2^{-2}. \]
Spheres with $3$ holes separating $\sigma_1$ and $\sigma_2$ give tangle decompositions of the knots.
Connecting $2$ of such spheres by $3$ tubes along the strands of the braids, we obtain embedded surfaces of genus $2$, which can be checked to be essential.
This construction is based on \cite[Theorem 3.2]{LP85-1}. 
One can also check that the knots $10_{99}$ and $10_{123}$ are not small in the list given in \cite{BCT12-1}.

\begin{theorem}\label{thm:genus-signature}
Let $K$ be a small knot such that $\deg \D_K(t) = 2g(K)$.
If the equivariant knot signature function
$\sigma_K$ is not identically zero,
then there is a curve component of $X^{\text{irr}}(K)$
which satisfies Property $(G)$.
\end{theorem}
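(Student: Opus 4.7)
The plan is to combine the strategies of Theorems~\ref{thm:signature} and~\ref{thm:genus-simple-root}. Theorem~\ref{thm:signature} upgrades the application of Proposition~\ref{pro:HPS} to an application of Proposition~\ref{pro:HHK}, using smallness of $K$ to ensure that the resulting component of $X^{\text{irr}}(K)$ is a curve; Theorem~\ref{thm:genus-simple-root} shows that the hypothesis $\deg \D_K(t)=2g(K)$ forces the reducible character $\chi_{\rho_\lambda}$ produced from Proposition~\ref{pro:HPS} to determine the genus. Concatenating these two ideas will give exactly what is needed.

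Concretely, I would first invoke the hypothesis that $\sigma_K$ is not identically zero, together with the facts $\sigma_K(1)=0$ and that $\sigma_K$ is locally constant off the zeros of $\D_K(t)$ on the unit circle, to extract some $\alpha\in(0,\pi]$ at which $\sigma_K$ changes value at $e^{2i\alpha}$; this $e^{2i\alpha}$ is then automatically a root of $\D_K(t)$. By Proposition~\ref{pro:HHK}, there is an irreducible component $X_0$ of $X^{\text{irr}}(K)$ with $\chi_{\rho_{e^{i\alpha}}}\in X_0\cap Y(K)$, and because $K$ is small, $X_0$ is a curve by \cite[Section~2.4]{CCGLS94-1} (as in the proof of Theorem~\ref{thm:signature}). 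Setting $\lambda=e^{i\alpha}$ and applying the formula
\[
\D_{K,\rho_\lambda}(t)=\frac{\D_K(\lambda t)\D_K(\lambda^{-1}t)}{(t-\lambda)(t-\lambda^{-1})}
\]
from \cite[Remark~3.1~(iv)]{KM10-1}, I would then observe that since $\lambda^2$ is a root of $\D_K(t)$, the numerator vanishes at $t=\lambda$ and $t=\lambda^{-1}$, so the right-hand side is a polynomial of degree $2\deg\D_K(t)-2=4g(K)-2$ by the assumption $\deg\D_K(t)=2g(K)$. In other words $\chi_{\rho_\lambda}$ determines the knot genus, and consequently, writing $\D_{K,\chi}(t)=\sum_{j=0}^{4g-2}\psi_j(\chi)\,t^j$ on $X_0$, the leading coefficient $\psi_{4g-2}$ is not identically zero on $X_0$.

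Finally I would apply Proposition~\ref{pro:finitely-many-points} with $c=0$ to conclude that $\psi_{4g-2}$ vanishes at only finitely many points of $X_0$, so all but finitely many characters $\chi\in X_0$ satisfy $\deg \D_{K,\chi}(t)=4g(K)-2$, i.e.\ they determine the genus; combined with the abelian character $\chi_{\rho_\lambda}\in X_0\cap Y(K)$ already produced, this gives Property~$(G)$ for $X_0$. There is no serious obstacle here, since every ingredient has been set up in Sections~2 and~3; the only step that warrants care is checking that the component furnished by Proposition~\ref{pro:HHK} is a curve, which is exactly where the smallness hypothesis on $K$ is used, and verifying that $e^{2i\alpha}$ lies in the zero set of $\D_K(t)$, which follows automatically from the recalled properties of $\sigma_K$.
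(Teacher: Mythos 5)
Your proposal is correct and follows essentially the same route as the paper: invoke Proposition~\ref{pro:HHK} at a jump point of $\sigma_K$, use smallness to get a curve component through the reducible character, note that $\deg\D_K(t)=2g(K)$ forces $\D_{K,\chi_{\rho_\lambda}}(t)=\D_K(\lambda t)\D_K(\lambda^{-1}t)/((t-\lambda)(t-\lambda^{-1}))$ to have degree $4g(K)-2$, and finish with Proposition~\ref{pro:finitely-many-points} applied to $\psi_{4g-2}$ with $c=0$. The paper states this only as ``the same proof as that of Theorem~\ref{thm:genus-simple-root} works,'' and your write-up simply makes those steps explicit.
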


\begin{proof}
Suppose that $\sigma_K$ changes its value at $e^{2 i \alpha}$.
By Proposition~\ref{pro:HHK}, there is a reducible character
$\chi_{\rho_{e^{i \alpha}}}$ and an irreducible component
$X_0$ of $X^{\text{irr}}(K)$ such that $\chi_{\rho_{e^{i \alpha}}} \in X_0 \cap Y(K)$.
Since $K$ is small, $X_0$ is a curve.
Now the same proof as that of Theorem~\ref{thm:genus-simple-root} works.
\end{proof}

The Alexander polynomials $\D_K(t)$ of the knots $8_{10}$ and $10_{143}$ have no simple root, but
we can check that their equivariant knot signature functions $\sigma_K$
are not identically zero. Hence, by Theorem~\ref{thm:genus-signature},
these two knots have curve components which satisfy Property $(G)$.

\begin{theorem} \label{thm:genus-epimorphism}
Let $K$ be a small knot with $\deg \D_K(t) = 2g(K)$.
If there are a knot $K'$ and an epimorphism $\phi \colon G(K) \to G(K')$
such that there is a component $X_0'$ of $X^{\text{irr}}(K')$
satisfying Property $(G)$,
then there is a curve component $X_0$ of $X^{\text{irr}}(K)$
which satisfies Property $(G)$.
\end{theorem}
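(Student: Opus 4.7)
The approach is to adapt the proof of Theorem~\ref{thm:epimorphism} to the genus setting, replacing the condition \emph{``the leading coefficient is $1$''} by \emph{``the coefficient of $t^{4g(K)-2}$ is zero''}. In particular, the construction of $X_{0}$ is identical; what changes is which regular function on $X_{0}$ we show to be nonconstant, and how we certify nonconstancy by evaluating at the abelian character.

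First I would build $X_{0}$ from $X_{0}'$ via the epimorphism exactly as in Theorem~\ref{thm:epimorphism}: the regular map $\phi^{*}\colon X(K')\to X(K)$ is injective, and since composing an irreducible representation with the epimorphism $\phi$ again yields an irreducible representation, $\phi^{*}(X_{0}')\subset X^{\text{irr}}(K)$. Set $X_{0}:=\phi^{*}(X_{0}')$. This is an irreducible subvariety of positive dimension; because $K$ is small, every positive-dimensional component of $X^{\text{irr}}(K)$ is a curve (cf.\ \cite[Section~2.4]{CCGLS94-1}), so $X_{0}$ is a curve component of $X^{\text{irr}}(K)$. Moreover, by Property~$(G)$ the component $X_{0}'$ contains an abelian character, and its image under $\phi^{*}$ is again an abelian character $\chi_{\eta_{\lambda}}$ of $G(K)$.

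Next I would compute the twisted Alexander polynomial at $\chi_{\eta_{\lambda}}$. Since $\chi_{\eta_{\lambda}}\in X^{\text{irr}}(K)$, Burde--de Rham yields a reducible nonabelian representation $\rho_{\lambda}\colon G(K)\to SL(2,\C)$ with $\chi_{\rho_{\lambda}}=\chi_{\eta_{\lambda}}$, so $\lambda^{2}$ is a root of $\D_{K}(t)$. The formula recalled in the proof of Theorem~\ref{thm:simple-root} then gives
\[
\D_{K,\chi_{\eta_{\lambda}}}(t)
=\frac{\D_{K}(\lambda t)\,\D_{K}(\lambda^{-1}t)}{(t-\lambda)(t-\lambda^{-1})}.
\]
Under the hypothesis $\deg\D_{K}(t)=2g(K)$, the numerator has $t$-degree $4g(K)$, with leading coefficient equal to the square of the leading coefficient of $\D_{K}(t)$, hence nonzero. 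Therefore $\D_{K,\chi_{\eta_{\lambda}}}(t)$ is a polynomial of degree exactly $4g(K)-2$, so $\chi_{\eta_{\lambda}}$ already determines the knot genus.

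Finally, writing $\D_{K,\chi}(t)=\sum_{j=0}^{4g(K)-2}\psi_{j}(\chi)\,t^{j}$ as in Proposition~\ref{pro:finitely-many-points}, the previous step shows that $\psi_{4g(K)-2}$ does not vanish at $\chi_{\eta_{\lambda}}$ and is therefore not identically zero on the curve $X_{0}$. Applying Proposition~\ref{pro:finitely-many-points} with $c=0$, the zero set of $\psi_{4g(K)-2}$ on $X_{0}$ is finite, so all but finitely many characters in $X_{0}$ satisfy $\deg\D_{K,\chi}(t)=4g(K)-2$ and thus determine the genus. Together with the abelian character $\chi_{\eta_{\lambda}}\in X_{0}$, this gives Property~$(G)$. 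I do not anticipate any serious obstacle; the one point worth double-checking is that the abelian character pulled back from $X_{0}'$ actually lies over a root of $\D_{K}(t)$, but this is automatic since $\D_{K'}(t)$ divides $\D_{K}(t)$ under the epimorphism (equivalently, it follows from Burde--de Rham applied on the $K$-side, using $\chi_{\eta_{\lambda}}\in X^{\text{irr}}(K)$).
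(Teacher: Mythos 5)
Your proof is correct and follows essentially the same route as the paper: pull back $X_0'$ via the injection $\phi^*$ to get a curve component of $X^{\text{irr}}(K)$ containing an abelian character (using smallness of $K$), observe that $\deg\D_K(t)=2g(K)$ forces that abelian character to determine the genus, and conclude via Proposition~\ref{pro:finitely-many-points}. The only difference is that you spell out the degree computation for $\D_{K,\chi_{\eta_\lambda}}(t)$ explicitly, which the paper leaves implicit.
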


\begin{proof}
We set $X_0$ to be the image of $X_0'$ of the injection $X(K') \to X(K)$
induced by $\phi$.
Then $X_0$ is a curve component of $X^{\text{irr}}(K)$ containing an abelian character $\chi_{\eta_\lambda}$
as in the proof of Theorem~\ref{thm:epimorphism}.
Since $\deg \Delta_K(t) = 2g(K)$, the character $\chi_{\eta_\lambda}$ determines the genus.
Hence the coefficient of the highest degree term of $\D_{K,\chi}(t)$ on $X_0$ is not identically zero,
which proves the theorem by Proposition~\ref{pro:finitely-many-points}.
\end{proof}

The equivariant knot signature functions $\sigma_K$
of the knots $8_{20},10_{137}$ and $10_{140}$ are identically zero,
but they admit the following epimorphisms to $2$-bridge knot groups
(see \cite[Theorem~1.1]{KS05-1}):
\[
G(8_{20})\twoheadrightarrow G(3_1),~
G(10_{137})\twoheadrightarrow G(4_1),~
G(10_{140})\twoheadrightarrow G(3_1).
\]
Therefore, by Theorems~\ref{thm:kim-morifuji-genus} and \ref{thm:genus-epimorphism},
these three knots have curve components which satisfy Property $(G)$.

More generally,
we obtain the following as an immediate corollary
of Theorems~\ref{thm:kim-morifuji-genus}, \ref{thm:genus-simple-root}, \ref{thm:genus-signature} and
\ref{thm:genus-epimorphism}.

\begin{corollary}
Let $K$ be a small knot with $\deg \D_K(t) = 2g(K)$.
If there are a knot $K'$ satisfying one of the following:
\begin{enumerate}
\item $K'$ is a $2$-bridge knot,
\item $\deg\D_{K'}(t) = 2g(K')$ and $\D_{K'}(t)$ has a simple root,
\item $\deg\D_{K'}(t) = 2g(K')$ and $\sigma_{K'}$ is not identically zero,
\end{enumerate}
and an epimorphism $\phi \colon G(K) \to G(K')$,
then there is a curve component $X_0$ of $X^{\text{irr}}(K)$
which satisfies Property $(G)$.
\end{corollary}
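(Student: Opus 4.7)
The plan is to verify that the corollary is an assembly of the four previously established results, combined by a single dispatch-and-push argument. First I would use whichever of the three hypotheses on $K'$ holds to invoke one of Theorems~\ref{thm:kim-morifuji-genus}, \ref{thm:genus-simple-root}, or \ref{thm:genus-signature}, thereby producing a curve component $X_0'$ of $X^{\text{irr}}(K')$ that satisfies Property~$(G)$: hypothesis (i) matches the statement of Theorem~\ref{thm:kim-morifuji-genus}; hypothesis (ii) matches Theorem~\ref{thm:genus-simple-root}; and hypothesis (iii) matches Theorem~\ref{thm:genus-signature}.

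Once such an $X_0'$ is in hand, the second and final step is to apply Theorem~\ref{thm:genus-epimorphism} to the epimorphism $\phi\colon G(K)\to G(K')$. That theorem takes as input the smallness of $K$, the degree equality $\deg \D_K(t)=2g(K)$, the component $X_0'\subset X^{\text{irr}}(K')$ with Property~$(G)$, and the epimorphism $\phi$; it produces a curve component $X_0\subset X^{\text{irr}}(K)$ satisfying Property~$(G)$, realized explicitly as the image $\phi^*(X_0')$ under the injective regular map $\phi^*\colon X(K')\to X(K)$ constructed in its proof. This is precisely the curve component claimed by the corollary.

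I do not anticipate a real obstacle here, since the three base-case theorems are set up to exhaust the hypotheses (i)--(iii) and Theorem~\ref{thm:genus-epimorphism} is the only transport mechanism required. The one small point to keep in mind is that in case (iii) the application of Theorem~\ref{thm:genus-signature} to $K'$ tacitly requires $K'$ itself to be small, in parallel with the analogous Corollary~\ref{cor:epimorphism} in Section~3; granting this, the argument is a purely formal concatenation of earlier results, with no additional computation needed.
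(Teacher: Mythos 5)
Your proposal matches the paper's own treatment exactly: the paper gives no separate argument and simply declares the corollary an immediate consequence of Theorems~\ref{thm:kim-morifuji-genus}, \ref{thm:genus-simple-root}, \ref{thm:genus-signature} and \ref{thm:genus-epimorphism}, i.e.\ the same dispatch on hypotheses (i)--(iii) followed by transport along $\phi^*$ via Theorem~\ref{thm:genus-epimorphism}. Your remark that case (iii) tacitly needs $K'$ to be small (so that Theorem~\ref{thm:genus-signature} yields a \emph{curve} component) is a fair and correctly handled observation about a hypothesis the paper leaves implicit.
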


The Alexander polynomials $\D_K(t)$ of the remained knots $10_{99}$ and $10_{123}$ have no simple root
and their equivariant knot signature functions $\sigma_K$ are identically zero.
By \cite[Theorem~1.1]{KS05-1}
the knot group $G(10_{99})$ admits an epimorphism to $G(3_1)$,
but $10_{99}$ is not small.
Moreover
$G(10_{123})$ admits no epimorphism
to the groups of knots of fewer crossings \cite{KS05-1}.
Therefore
we can say nothing about the existence of curve components
which satisfy Property $(G)$ for these knots.

\section{Example}

Let $K$ be the knot $9_{35}$, which is a nonfibered alternating knot of
genus $1$ with $\D_K(t) = 7t^2-13t+7$.
As in Figure~\ref{fig:9_35}, the knot 
$K$ is the $(-3, -3, -3)$ pretzel knot which is a Montesinos knot
with length $3$, and so $K$ is a small knot.
By Theorem~\ref{thm:simple-root} there is a curve component of $X^{\text{irr}}(K)$
with finitely many monic characters,
and by Theorem~\ref{thm:genus-simple-root}
there is also one where all but finitely many characters determine the genus.
Here we explicitly give such curve components.

\begin{figure}[h]
\centering
\includegraphics[width=5cm, clip]{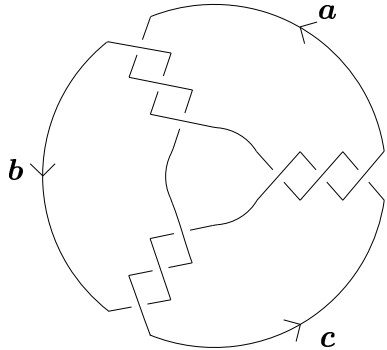}
\caption{The knot $9_{35}$}
\label{fig:9_35}
\end{figure}

The knot $K$ has a period $3$, as is easy to see in Figure~\ref{fig:9_35}.
In fact, $K$ is the inverse image of one unknotted component
of the $2$-bridge link $12/5$ by the $3$-fold branched covering map $S^3 \to S^3$
whose branching set is the other unknotted component.
First, using the method of Hilden, Lozano and Montesinos-Amilibia \cite{HLM95-1, HLM03-1},
we compute defining equations of the curve components of $X(K)$
which come from the character variety of the orbifold fundamental group
of the quotient orbifold by the periodicity.
By \cite[Proposition 5.3]{HLM95-1} the nontrivial components
of the character variety of the link $12/5$ are defined by $r_6(y_1, y_2, v) = 0$,
where $r_6(y_1, y_2, v)$ is inductively defined by
\begin{align*}
r_m(y_1, y_2, v) = &-y_1^{-1}y_2t_{m-1}r_{m-3}(y_2, y_1, v) \\
&+\frac{1}{2}(-2+y_2^2+y_1^{-1}y_2t_{m-1}(2v-y_1y_2))r_{m-2}(y_1, y_2, v) \\
&+\frac{1}{2}(-2y_1^{-1}y_2t_{m-1}+y_1y_2t_{m-1}+2v-y_1y_2)r_{m-1}(y_2, y_1, v), \\
r_0(y_1, y_2, v) = &\,0,~ r_1(y_1, y_2, v) = 1,~ r_2(y_1, y_2, v) = v, \\
t_1 = &\,1,~ t_2 = -1,~ t_3 = 1,~ t_4 = -1,~ t_5 = 1.
\end{align*}
Here $y_1, y_2$ are the trace functions of two standard generators
of the $2$-bridge link group and $v$ is the trace function of the product of these generators.
A computation implies
\[ r_6(y_1, y_2, v) = (v^2-v y_1 y_2+y_1^2+y_2^2-3)(v^3-v^2 y_1 y_2+v y_1^2+v y_2^2-v-y_1 y_2). \]
By \cite[Theorem 3.1]{HLM03-1} it follows from the equation $r_6(y_1, y_2, v) = 0$
that $X(K)$ contains nontrivial curve components defined by
\begin{align*}
f(y, b, w) &=0, \\
(b+2)(wy-b-z)-w^2 &=0, \\
b+1 &=0,
\end{align*}
where $f(y, b, w)$ is the polynomial obtained from $y_1^5 r_6(y_1, y_2, v)$ by the following change of variables:
\begin{align*}
y = y_2, \\
b = y_1^2-2, \\
w = y_1 v.
\end{align*}
By substituting $-1$ for $b$, the equations become
\begin{align*}
(w^2-w y+y^2-2) (w^3-w^2 y+w y^2-y) &=0, \\
w^2-w y+z-1 &=0.
\end{align*}
Taking the resultant in $w$, we have
\[ (y^2-z-1)^2 (y^4 z-2 y^4-2 y^2 z^2+5 y^2 z-2 y^2+z^3-3 z^2+3 z-1) = 0, \]
where $y, z$ are the trace functions of a meridian,
and the product of it and its image by the periodic map, respectively.
We denote by $C, C'$ the curves defined by
\begin{align*}
y^2-z-1 &= 0, \\
y^4 z-2 y^4-2 y^2 z^2+5 y^2 z-2 y^2+z^3-3 z^2+3 z-1 &= 0,
\end{align*}
respectively.

Next we compute the restrictions of the regular function $\psi_2$ to $C, C'$
induced by the highest degree terms of twisted Alexander polynomials
as in Proposition~\ref{pro:finitely-many-points}.
Taking the meridional elements $a, b, c$ depicted in Figure $1$,
we have $G(K) = \langle a, b, c \mid r, s \rangle$, where
\begin{align*}
r &= a\bar{b}ab\bar{a}b\bar{c}b\bar{c}\bar{b}c\bar{b}, \\
s &= b\bar{c}bc\bar{b}c\bar{a}c\bar{a}\bar{c}a\bar{c}.
\end{align*}
Here we write $\bar{a}, \bar{b}, \bar{c}$ for $a^{-1}, b^{-1}, c^{-1}$ respectively.
Note that
\[ y = \tau_a = \tau_b = \tau_c. \]
Since $C, C'$ are symmetric with respect to the periodicity of $X(K)$
induced by that of $K$,
\[ z = \tau_{ab} = \tau_{bc} = \tau_{ca} \]
on these curves.
Since
\begin{align*}
\frac{\partial r}{\partial a} = &1+a\bar{b}-a\bar{b}ab\bar{a}, \\
\frac{\partial r}{\partial b} = &-a\bar{b}+a\bar{b}a+a\bar{b}ab\bar{a}+a\bar{b}ab\bar{a}b\bar{c}-a\bar{b}ab\bar{a}b\bar{c}b\bar{c}\bar{b}-a\bar{b}ab\bar{a}b\bar{c}b\bar{c}\bar{b}c\bar{b}, \\
\frac{\partial s}{\partial a} = &-b\bar{c}bc\bar{b}c\bar{a}-b\bar{c}bc\bar{b}c\bar{a}c\bar{a}+b\bar{c}bc\bar{b}c\bar{a}c\bar{a}\bar{c}, \\
\frac{\partial s}{\partial b} = &1+b\bar{c}-b\bar{c}bc\bar{b},
\end{align*}
for an $SL(2, \C)$-representation $\rho$,
\[ \D_{K, \rho}(t) = \frac{\det (At+B)}{\det (\rho(c) t - I)}, \]
where 
$I$ denotes the $2\times2$ identity matrix and 
\begin{align*}
A &= 
\begin{pmatrix}
-\rho(a\bar{b}ab\bar{a}) & \rho(a\bar{b}a)+\rho(a\bar{b}ab\bar{a})+\rho(a\bar{b}ab\bar{a}b\bar{c}) \\
-\rho(b\bar{c}bc\bar{b}c\bar{a})-\rho(b\bar{c}bc\bar{b}c\bar{a}c\bar{a}) & -\rho(b\bar{c}bc\bar{b})
\end{pmatrix}, \\
B &= 
\begin{pmatrix}
I + \rho(a\bar{b}) & \rho(a\bar{b})-\rho(a\bar{b}ab\bar{a}b\bar{c}b\bar{c}\bar{b})-\rho(a\bar{b}ab\bar{a}b\bar{c}b\bar{c}\bar{b}c\bar{b}) \\
\rho(b\bar{c}bc\bar{b}c\bar{a}c\bar{a}\bar{c}) & I +\rho(b\bar{c})
\end{pmatrix}.
\end{align*}
Hence
\begin{align*}
\psi_2(\chi_\rho) = &\det A \\
= &\det \left(
\begin{smallmatrix}
-\rho(a\bar{b}ab\bar{a}) & \rho(a\bar{b}a)+\rho(a\bar{b}ab\bar{a})+\rho(a\bar{b}ab\bar{a}b\bar{c}) \\
0 & -\rho(b\bar{c}bc\bar{b}c\bar{a})(\rho(c\bar{a}b\bar{c})+\rho(a\bar{b})+\rho(a\bar{c})+\rho(b\bar{c})+\rho(c\bar{a})+\rho(c\bar{b})+I))
\end{smallmatrix}
\right) \\
= &\det (\rho(c\bar{a}b\bar{c})+\rho(a\bar{b})+\rho(a\bar{c})+\rho(b\bar{c})+\rho(c\bar{a})+\rho(c\bar{b})+I) \\
= &\frac{1}{2}((\tr(\rho(c\bar{a}b\bar{c})+\rho(a\bar{b})+\rho(a\bar{c})+\rho(b\bar{c})+\rho(c\bar{a})+\rho(c\bar{b})+I))^2 \\
&-\tr((\rho(c\bar{a}b\bar{c})+\rho(a\bar{b})+\rho(a\bar{c})+\rho(b\bar{c})+\rho(c\bar{a})+\rho(c\bar{b})+I)^2)) \\
= &2(\tr \rho(a\bar{b}))^2+2(\tr \rho(b\bar{c}))^2+2(\tr \rho(c\bar{a}))^2 \\
&+4\tr \rho(a\bar{b}) \tr \rho(b\bar{c})+4\tr \rho(b\bar{c}) \tr \rho(c\bar{a})+4\tr \rho(c\bar{a}) \tr \rho(a\bar{b}) \\
&-\tr \rho(a\bar{b}c\bar{a}b\bar{c})-\tr \rho(a\bar{b}a\bar{b})-\tr \rho(b\bar{c}b\bar{c})-\tr \rho(c\bar{a}c\bar{a}) \\
&-2\tr \rho(a\bar{b}a\bar{c})-2\tr \rho(b\bar{c}b\bar{a})-2\tr \rho(c\bar{a}c\bar{b})-3 \\
= &(2\tau_{a\bar{b}}^2+2\tau_{b\bar{c}}^2+2\tau_{c\bar{a}}^2+4\tau_{a\bar{b}} \tau_{b\bar{c}}+4\tau_{b\bar{c}} \tau_{c\bar{a}}+4\tau_{c\bar{a}} \tau_{a\bar{b}}-\tau_{a\bar{b}c\bar{a}b\bar{c}} \\
&-\tau_{a\bar{b}a\bar{b}}-\tau_{b\bar{c}b\bar{c}}-\tau_{c\bar{a}c\bar{a}}-2\tau_{a\bar{b}a\bar{c}}-2\tau_{b\bar{c}b\bar{a}}-2\tau_{c\bar{a}c\bar{b}}-3)(\chi_\rho).
\end{align*}
Here by trace identities we have
\begin{align*}
\tau_{a\bar{b}} = \tau_{b\bar{c}} = \tau_{c\bar{a}} &= x, \\
\tau_{a\bar{b}a\bar{b}} = \tau_{b\bar{c}b\bar{c}} = \tau_{c\bar{a}c\bar{a}} &= x^2-2, \\
\tau_{a\bar{b}a\bar{c}} = \tau_{b\bar{c}b\bar{a}} = \tau_{c\bar{a}c\bar{b}} &= x^2-x, \\
\tau_{a\bar{b}c\bar{a}b\bar{c}} &= -x^3+3x^2-2
\end{align*}
on $C, C'$, where we set $x = y^2-z$.
Consequently, we obtain
\[ \psi_2 = x^3+6x^2+6x+5 \]
on $C, C'$.

It is easy to check that $\psi_2$ is the constant function with value $18$ on $C$.
In particular, there is no monic character in $C$ and every character in $C$  
determines the knot genus. 
A straightforward computation implies that the number of monic characters in $C'$ is $6$ and
that all but $2$ characters in 
$C'$ determine the genus.

\section*{Acknowledgments}
The authors would like to thank Hiroshi Matsuda for telling us
various constructions of closed essential surfaces in knot complements
and the literature \cite{Finkelstein98-1, LP85-1, Matsuda01-1}.
They would also like to thank the anonymous referee for helpful comments.
The first author was supported by Basic Science Research
Program through the National Research Foundation of Korea (NRF) funded by the
Ministry of Education, Science and Technology (No. 2012R1A1A2012001747 and No. 20120000341).
The second author was supported by JSPS Research Fellowships for Young Scientists.
The third author was partially supported by
Grant-in-Aid for Scientific Research (No.\,23540076),
the Ministry of Education, Culture, Sports, Science
and Technology, Japan.


\end{document}